\title{Veronese subspace codes}
\author{Antonio Cossidente\\ Dipartimento di Matematica Informatica ed Economia \\ Universit\`a della Basilicata\\
 Contrada Macchia Romana\\ I-85100 Potenza\\ Italy\\antonio.cossidente@unibas.it\\
 Francesco Pavese\\Dipartimento di Matematica Informatica ed Economia\\ Universit\`a della Basilicata\\
 Contrada Macchia Romana\\ I-85100 Potenza\\ Italy\\francesco.pavese@unibas.it}
\date{}
\begin{document}
\maketitle

 \newpage\noindent
 {\bf Proposed Running Head:} Veronese subspace codes
 \vspace{2cm}\par\noindent {\bf Corresponding
 Author:}\\Antonio Cossidente\\
 Dipartimento di Matematica Informatica ed Economia\\ Universit\`a della Basilicata\\
 Contrada Macchia Romana\\ I-85100 Potenza\\ Italy\\antonio.cossidente@unibas.it
\newpage
\newtheorem{theorem}{Theorem}[section]
\newtheorem{lemma}[theorem]{Lemma}
\newtheorem{conj}[theorem]{Conjecture}
\newtheorem{remark}[theorem]{Remark}
\newtheorem{cor}[theorem]{Corollary}
\newtheorem{prop}[theorem]{Proposition}
\newtheorem{defin}[theorem]{Definition}
\newtheorem{result}[theorem]{Result}

 \def\runningheadeven{Subspace codes}
\def\runningheadodd{Antonio Cossidente and Francesco Pavese}

\newcommand{\Prf}{\noindent{\bf Proof}.\quad }
\renewcommand{\labelenumi}{(\arabic{enumi})}


\def\bA{\mathbf A}
\def\bE{\mathbf E}
\def\bF{\mathbf F}
\def\bG{\mathbf G}
\def\bP{\mathbf P}
\def\bN{\mathbf N}
\def\bZ{\mathbf Z}
\def\bL{\mathbf L}
\def\bQ{\mathbf Q}
\def\bU{\mathbf U}
\def\bV{\mathbf V}
\def\bW{\mathbf W}
\def\bX{\mathbf X}
\def\bY{\mathbf Y}
\def\cC{\mathcal C}
\def\cD{\mathcal D}
\def\cE{\mathcal E}
\def\cF{\mathcal F}
\def\cG{\mathcal G}
\def\cH{\mathcal H}
\def\cL{\mathcal L}
\def\cM{\mathcal M}
\def\cK{\mathcal K}
\def\cO{\mathcal O}
\def\cP{\mathcal P}
\def\cX{\mathcal X}
\def\cY{\mathcal Y}
\def\cU{\mathcal U}
\def\cV{\mathcal V}
\def\cT{\mathcal T}
\def\cR{\mathcal R}
\def\cS{\mathcal S}
\def\cW{\mathcal W}
\def\cFR{\mathcal{Fr}}
\def\PG{{\rm PG}}
\def\PGL{{\rm PGL}}
\def\GF{{\rm GF}}
\def\AG{{\rm AG}}
\def\GL{{\rm GL}}
\def\PGO{{\rm PGO}}

\def\ps@headings{
 \def\@oddhead{\footnotesize\rm\hfill\runningheadodd\hfill\thepage}
 \def\@evenhead{\footnotesize\rm\thepage\hfill\runningheadeven\hfill}
 \def\@oddfoot{}
 \def\@evenfoot{\@oddfoot}
}

\begin{abstract}
Using the geometry of quadrics of a projective plane $\PG(2,q)$
a family of $(6,q^3(q^2-1)(q-1)/3+(q^2+1)(q^2+q+1),4;3)_q$ constant dimension subspace codes is constructed.
\end{abstract}
\par\noindent
{\bf KEYWORDS:} projective bundle; constant dimension subspace code; Singer cyclic group; Veronese map;
\par\noindent
{\bf AMS MSC:} 51E15, 05B25

    \section{Introduction}
 Let $V$ be an $n$--dimensional vector space over $\GF(q)$, $q$ any prime power. The set $S(V)$ of all subspaces of $V$, or subspaces of the projective space $\PG(V)$, forms a metric space with respect to the {\em subspace distance} defined by $d_s(U,U')=\dim (U+U')- \dim(U\cap U')$. In the context of subspace codes, the main problem is to determine the largest possible size of codes in the space $(S(V),d_s)$ with a given minimum distance, and to classify the corresponding optimal codes. The interest in these codes is a consequence of the fact that codes in the projective space and codes in the Grassmannian over a finite field referred to as subspace codes and constant--dimension codes, respectively, have been proposed for error control in random linear network coding.
An $(n,M,d;k)_q$ constant--dimension subspace code (CDC) is a set $\cal C$ of $k$--subspaces of $V$ with $\vert{\cal C}\vert =M$ and minimum subspace distance $d_s({\cal C})=\min\{d_s(U,U') \vert U,U'\in {\cal C}, U\ne U' \}=d$. The smallest open constant--dimension case occurs when $n = 6$ and $k = 3$. From a projective geometry point of view it translates in the determination of the maximum number of planes in $\PG(5,q)$ mutually intersecting in at most one point.
In \cite{HKK}, the authors show that the maximum size of a binary subspace code of packet length $n=6$, minimum subspace distance $d=4$ and constant dimension $k=3$ is $M=77$. Therefore the maximum number of planes in $\PG(5,2)$ mutually intersecting in at most one point is $77$. In the same paper, the authors, with the aid of a computer, classify all $(6,77,4;3)_2$ subspace codes into $5$ isomorphism types \cite[Table 6]{HKK} and a computer--free construction of one isomorphism type \cite[Table 6, A]{HKK} is provided. This last isomorphism type is then generalized to any $q$ providing a family of $(6,q^6+2q^2+2q+1,4;3)_q$ subspace codes \cite[Lemma 12]{HKK}. In \cite{CP1} the authors provided a construction of families of $(6,q^6+2q^2+2q+1,4;3)_q$ subspace codes potentially including the infinite family constructed in \cite{HKK}.

In this paper we construct a family of $(6,q^3(q^2-1)(q-1)/3+(q^2+1)(q^2+q+1),4;3)_q$ CDC.
Our approach is purely geometric and the construction relies on the geometry of quadrics of a projective plane $\PG(2,q)$. More precisely, we use the correspondence between quadrics of $\PG(2,q)$ and points of $\PG(5,q)$. In this setting, we show that a special net of conics (circumscribed bundle) yields a $(6,q^3(q^2-1)(q-1)/3,4;3)_q$ CDC admitting the linear group $\PGL(3,q)$ as an automorphism group. Although the size of such a code asymptotically reaches the theoretical upper bound of a  $(6,M,4;3)_q$ CDC \cite{HKK}, it turns out that it can be enlarged.
This is done in the second part of the paper where we are able to find a set of further $(q^2+1)(q^2+q+1)$ planes of $\PG(5,q)$
mutually intersecting in at most one point and extending the previous code. The $(6,q^3(q^2-1)(q-1)/3+(q^2+1)(q^2+q+1),4;3)_q$ CDC so obtained admits the normalizer of a Singer cyclic group of $\PGL(3,q)$ as an automorphism group.

	\section{The Veronese embedding}	
	
Let $\PG(2,q)$ the Desarguesian projective plane of order $q$.  A {\em quadric} of $\PG(2,q)$ is the locus of zeros of a quadratic polynomial, say $a_{11}X_1^2+a_{22}X_2^2+a_{33}X_3^2+a_{12}X_1X_2+a_{13}X_1X_3+a_{23}X_2X_3$. There are six parameters associated to such a curve and hence the set of quadrics of $\PG(2,q)$ forms a $5$--dimensional projective space. There exist four kinds of quadrics in $\PG(2,q)$, three of which are {\em degenerate} (splitting into lines, which could be in the plane $\PG(2,q^2)$) and one of which is {\em non--degenerate} \cite{JWPH1}.

The {\em Veronese map v} defined by
$$
a_{11}X_1^2+a_{22}X_2^2+a_{33}X_3^2+a_{12}X_1X_2+a_{13}X_1X_3+a_{23}X_2X_3\mapsto (a_{11},a_{22},a_{33},a_{12},a_{13},a_{23}),
$$
is the correspondence between plane quadrics and the points of $\PG(5,q)$,

The quadrics in $\PG(2,q)$ are:

\begin{itemize}
\item[1.] $q^2+q+1$ repeated lines;
\item[2.] $(q^2+q+1)(q+1)q/2$ quadrics consisting of two distinct lines of $\PG(2,q)$ (bi--lines);
\item[3.] $(q^2+q+1)(q-1)q/2$ quadrics consisting of two distinct conjugate lines of $\PG(2,q^2)$ (imaginary bi--lines).
\item[4.] $q^5-q^2$ non--degenerate quadrics (conics).
\end{itemize}

We will say that a bi--line or an imaginary bi--line is {\em centered} at $A$ if its lines meet in the point $A$.

When $q$ is even, all tangent lines to a conic $\cal C$ pass through a point of $\PG(2,q)$ called the {\em nucleus} of $\cal C$.

It is not difficult to see that a quadric of $\PG(2,q)$ is degenerate if and only if its parameters satisfy the polynomial
$$P_1:=X_4X_5X_6+X_1X_6^2+X_2X_5^2+X_3X_4^2,$$ when $q$ is even and
$$P_2:=X_1X_2X_3+2X_4X_5X_6+X_1X_6^2+X_2X_5^2+X_3X_4^2,$$ when $q$ is odd.

Notice that from \cite[Theorem 25.1.3]{HT} the image of the Veronese map {\em v} is the dual of the image of the map $\zeta$ defined in \cite[p. 146]{HT}.

The group $G:=\PGL(3,q)$ acts on $\PG(2,q)$ and so it also acts naturally on the plane quadrics, and hence also on $\PG(5,q)$.
The four sets of quadrics described above are $G$--orbits. With a slight abuse of notation we will denote by $G$ the group $\PGL(3,q)$
acting on $\PG(5,q)$.
We will denote by ${\cal O}_i$, $i=1,2,3,4$ the images under {\em v} in $\PG(5,q)$ of the four types of quadrics, respectively.
It turns out that ${\cal O}_1$ is the {\em Veronese surface} when $q$ is odd and a plane (called {\em degenerate Veronese surface})
when $q$ is even.  The orbits ${\cal O}_i$, $i=1,2,3$, partition the cubic hypersurface ${\cal S}$ of $\PG(5,q)$ with equation $P_1=0$ when $q$ is even and with equation $P_2=0$ when $q$ is odd.

It should be noted that under the map {\em v} a $k$--dimensional linear system of quadrics of $\PG(2,q)$ corresponds to a $(k-1)$--dimensional projective subspace of $\PG(5,q)$. This means that pencils, nets and webs of quadrics, are represented by lines, planes and solids of $\PG(5,q)$, respectively.

Let us fix a point $A$ of $\PG(2,q)$. The $q+1$ lines passing through $A$ considered as repeated lines,  the $q(q+1)/2$ bi--lines centered at $A$ and the $q(q-1)/2$ imaginary bi--lines centered at $A$ form a net that under the Veronese map {\em v} corresponds to a plane $\pi_A$ contained in $\cal S$ and meeting ${\cal O}_1$ at $q+1$ points forming either a conic ($q$ odd) or a line ($q$ even).
Hence, there is a set $\cal N$ of $q^2+q+1$ such planes. Also, through a point $P\in {\cal S}\setminus {\cal O}_1$ there passes exactly one plane of $\cal N$ whereas through a point of ${\cal O}_1$ there pass $q+1$ planes of $\cal N$ . It follows that two distinct planes in $\cal N$ meet in a point of $\cO_1$.

Let us fix two distinct points of $\PG(2,q)$, say $A$ and $B$. Let $\ell$ be the line joining $A$ and $B$. There are $q^2+q$ bi--lines of $\PG(2,q)$ containing $\ell$. These bi--lines together $\ell$ (considered as a repeated line) form a  net that under the Veronese map {\em v} corresponds to a plane $\pi_{\ell}$ contained in $\cal S$ and tangent to ${\cal O}_1$. Hence, there is a set $\cal T$ of $q^2+q+1$ such planes. Also, through a point $P\in{\cal O}_2$ there pass exactly two planes of $\cal T$ whereas through a point $P\in{\cal O}_1$ there passes exactly one plane of $\cal T$. It follows that two distinct planes in $\cal T$ meet in a point of $\cO_2$.

	\section{Circumscribed bundles}

There exists a collection of $q^2+q+1$ conics in $\PG(2,q)$ that mutually intersect in exactly one point, and hence serve as the lines of another projective plane on the points of $\PG(2,q)$.
Such a collection of conics is called a {\em projective bundle} of $\PG(2,q)$. For more details on projective bundles, see \cite{BBEF}.

\begin{remark}
{\rm Let us embed $\PG(2,q)$ into $\PG(2,q^3)$, and let $\sigma$ be the period $3$ collineation of $\PG(2,q^3)$ fixing $\PG(2,q)$. Let us fix a triangle $T$ of vertices $P$, $P^\sigma$, $P^{\sigma^2}$ in $\PG(2,q^3)$. Up to date, the known types of projective bundles are as follows \cite{DGG}, \cite{DGG1}:

	\begin{itemize}
\item[1.] {\em circumscribed bundle} consisting of all conics of $\PG(2,q)$ containing the vertices of $T$. This exists for all $q$;

\item[2.] {\em inscribed bundle} consisting of all conics of $\PG(2,q)$ that are tangent to the three sides of $T$. This exists for all odd $q$;

\item[3.] {\em self--polar bundle} consisting of all conics of $\PG(2,q)$ with respect to which $T$ is self--polar. This exists for all odd $q$.
	
	\end{itemize}
From \cite{BBEF} the conics of a circumscribed bundle form a net.}	
	\end{remark}
	
	\begin{remark}
{\rm  A cyclic group of $G$ permuting points (lines) of $\PG(2,q)$ in a single orbit is called a {\em Singer cyclic group} of $G$.
A generator of a Singer cyclic group is called a {\em Singer cycle}.

A Singer cyclic group of $G$ has order $q^2+q+1$ and its normalizer in $G$ turns out to be a metacyclic group of order $3(q^2+q+1)$. For more details, see \cite{huppert}.}
	\end{remark}

	\begin{remark}		
{\rm All these projective bundles are invariant under the normalizer of a Singer cyclic group of $G$.}		
	\end{remark}

Let ${\cal B}$ be a circumscribed bundle of $\PG(2,q)$.  We will need the following result, which extends \cite[Lemma 3.2]{CP1}.

\begin{lemma}\label{bundle}
     Consider two distinct conics $C_0,C_\infty$ of a circumscribed bundle $\cal B$. If $q$ is even, their nuclei are distinct. If $q$ is odd, for a point $P\in\PG(2,q)$ the polar lines of $P$ with respect to $C_0$ and $C_\infty$ are distinct.
\end{lemma}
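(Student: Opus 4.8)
The plan is to reduce both statements to a single structural fact: the pencil spanned by $C_0$ and $C_\infty$ contains no degenerate conic. Indeed, $\cal B$ is a net of $q^2+q+1$ non--degenerate conics, and under the Veronese map $v$ a net is a plane of $\PG(5,q)$, which has exactly $q^2+q+1$ points; hence every member of $\cal B$ is a non--degenerate conic. The two distinct conics $C_0,C_\infty$ span a line of the plane $v({\cal B})$, that is, a pencil $\{C_0+\lambda C_\infty:\lambda\in\GF(q)\}\cup\{C_\infty\}$ whose $q+1$ members all lie in $\cal B$ and are therefore non--degenerate. The whole proof consists in showing that a coincidence of the polars (resp. of the nuclei) would produce a degenerate member of this pencil.

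For $q$ odd I would write $C_0,C_\infty$ as $X^tM_0X=0$ and $X^tM_\infty X=0$ with $M_0,M_\infty$ symmetric and non--singular, so that the polar line of a point $P\in\PG(2,q)$ with respect to $C_0$ (resp. $C_\infty$) has coordinate vector $M_0P$ (resp. $M_\infty P$). If these polars coincided we would have $M_0P=\lambda M_\infty P$ for some $\lambda\in\GF(q)$; since $M_0,M_\infty$ are non--singular and $P\neq 0$, both $M_0P$ and $M_\infty P$ are non--zero, forcing $\lambda\neq 0$. Then $(M_0-\lambda M_\infty)P=0$ with $P\neq 0$ gives $\det(M_0-\lambda M_\infty)=0$, so the genuine pencil member $C_0-\lambda C_\infty$ is degenerate, contradicting the previous paragraph.

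For $q$ even I would attach to a conic $Q=a_{11}X_1^2+\cdots+a_{23}X_2X_3$ its polar bilinear form, whose Gram matrix $B(Q)$ has zero diagonal and off--diagonal entries $a_{12},a_{13},a_{23}$. The assignment $Q\mapsto B(Q)$ is $\GF(q)$--linear, the nucleus of $Q$ is the radical $\ker B(Q)$, and $Q$ is non--degenerate precisely when $B(Q)$ has rank $2$ and its nucleus $N$ satisfies $Q(N)\neq 0$. Assuming $C_0,C_\infty$ share a nucleus $N$, we have $B(C_0)N=B(C_\infty)N=0$, so by linearity $B(C_0+\lambda C_\infty)N=0$ for every $\lambda$; since each pencil member is non--degenerate, its Gram matrix is non--zero of rank $2$, whence $N$ is the common nucleus of the entire pencil and $(C_0+\lambda C_\infty)(N)\neq 0$ for all $\lambda$. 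But $\lambda\mapsto(C_0+\lambda C_\infty)(N)=C_0(N)+\lambda\,C_\infty(N)$ is affine with $C_0(N),C_\infty(N)\neq 0$, and the choice $\lambda=C_0(N)\,C_\infty(N)^{-1}\in\GF(q)^*$ yields $(C_0+\lambda C_\infty)(N)=2\,C_0(N)=0$; this pencil member is then degenerate, a contradiction.

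The only delicate ingredients are the two degeneracy criteria invoked at the end. In the odd case the fact that a symmetric $3\times 3$ matrix with non--trivial kernel represents a degenerate conic is immediate. The genuinely characteristic--dependent work is the even case, where I must justify the nucleus/degeneracy dictionary, namely that $\operatorname{rank}B(Q)=2$ together with $Q(N)=0$ forces $Q$ to split into two lines through its nucleus $N$, so that the vanishing of $Q$ at the nucleus is exactly the signature of degeneracy. Setting up this characteristic--$2$ dictionary is the main obstacle; once it is in place, both halves of the statement follow at once from the absence of degenerate members in the pencil determined by $C_0$ and $C_\infty$.
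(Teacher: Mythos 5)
Your proof is correct, and it takes a genuinely different route from the paper's. Both arguments ultimately rest on the same structural fact---the pencil spanned by $C_0$ and $C_\infty$ lies inside the net formed by $\mathcal B$, hence has no degenerate member---but you convert the hypotheses into linear algebra on Gram matrices, whereas the paper argues with projectivities: after settling $P\in C_0$ directly and delegating both the even case and the odd case with $P=C_0\cap C_\infty$ to \cite[Lemma 3.2]{CP1}, it shows that each product of polarities $\perp_0\perp_\lambda$, with matrix $A_0^{-t}(A_0+\lambda A_\infty)$, fixes $P$, so that every member of the pencil assigns $P$ the same polar line $r_0$; since the conics of the pencil cover all of $\PG(2,q)$, some member passes through $P$, forcing $P\in r_0$ and contradicting $P\notin C_0$. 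Your odd case is shorter and uniform in $P$ (no case split and no external citation): coincident polars yield $(M_0-\lambda M_\infty)P=0$ with $\lambda\neq 0$, hence a nonzero singular symmetric matrix in the pencil, i.e.\ a degenerate member. Your even case is the one place where the paper offers no argument at all, and yours is sound: the common nucleus $N$ lies in the radical of every $B(C_0+\lambda C_\infty)$, non-degeneracy of each member forces rank $2$ and $C_\lambda(N)\neq 0$, yet the affine map $\lambda\mapsto C_0(N)+\lambda C_\infty(N)$ visibly vanishes at $\lambda=C_0(N)C_\infty(N)^{-1}$ in characteristic $2$. The dictionary you flag as the main obstacle (rank-$2$ polar form together with $Q(N)=0$ forces $Q$ to be a cone with vertex $N$) is standard and immediate from the identity $Q(N+tX)=Q(N)+tB(N,X)+t^2Q(X)=t^2Q(X)$, so it is not a genuine gap. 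On balance, your route buys a self-contained, case-free proof of the full statement; the paper's route buys brevity by reusing \cite[Lemma 3.2]{CP1}, and its fixed-point analysis of $\perp_0\perp_\infty$ directly motivates the remark following the lemma (which, to be fair, also follows at once from the lemma itself under either proof).
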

\begin{proof}
If $q$ is even or if $q$ is odd and $P=C_0\cap C_\infty$ then the result follows from \cite[Lemma 3.2]{CP1}. Assume that $q$ is odd and $P\ne C_0\cap C_\infty$. Let $r_0$ and $r_\infty$ be the polar lines of $P$ with respect to $C_0$ and $C_\infty$, respectively.  By way of contradiction let $r_0=r_\infty$. If  $P\in C_0$ then $P\in r_0$ but $P\not\in r_\infty$,  a contradiction.

Let $A_0,A_{\infty}$, be the symmetric $3\times 3$ matrices associated to $C_0$ and $C_{\infty}$, respectively. Let ${\cal F}=\{C_\lambda,\lambda\in\GF(q)\cup\{\infty\} \}$ be the pencil generated by $C_0$ and $C_\infty$. We have that the quadrics of $\cal F$ are the conics (non degenerate quadrics) of $\cal B$ through $C_0\cap C_\infty$ and they cover all points of $\PG(2,q)$ . Let $\perp_\lambda$ denote the polarity associated with the conic in $C_\lambda\in {\cal F}$.
The product $\perp_0\perp_\infty$ is then a projectivity of $\PG(2,q)$ fixing $P$ whose associated matrix is $A_0^{-t}A_\infty$, where $t$ denotes transposition. In other terms $(A_0^{-t}A_ {\infty})(P^t)=\rho P^t$, for some $\rho\in\GF(q)\setminus\{0\}$. Analogously, $\perp_0\perp_{\lambda}$ is a projectivity of $\PG(2,q)$ whose associated matrix is $A_0^{-t}(A_0+\lambda A_{\infty})$ and fixing $P$.
Indeed, $(A_0^{-t}(A_0+\lambda A_{\infty}))(P^t)=(I+\lambda A_0^{-t}A_{\infty})(P^t)=P^t+\lambda\rho P^t=(1+\lambda\rho)(P^t)$.
It turns out that $(P^{\perp_{\lambda}})^{\perp_0}=P$ if and only if $P^{\perp_{\lambda}}=r_0$ if and only if $r_0^{\perp_{\lambda}}=P$ for every $\lambda\in\GF(q)\setminus\{0\}$. Let $\lambda_0\in\GF(q)\setminus\{0\}$ such that $P\in C_{\lambda_0}$. Then $P\in P^{\perp_{\lambda_0}}=r_0$ and hence $P\in r_0$, a contradiction.
\end{proof}

	\begin{remark}
{\rm Notice that if $q$ is odd then the projectivity obtained as the product of two polarities associated to distinct conics of a circumscribed bundle is fixed point free.}	
	\end{remark}

Since ${\cal B}$ is stabilized by the normalizer $N$ of a Singer cyclic group $S$ of $G$ that is maximal in $G$ \cite{BHR} we get that
${\cal B}^G$ has size $q^3(q^2-1)(q-1)/3$.	
	
From \cite{RF} the group $G$ has three orbits on points and lines of $\PG(2,q^3)$. The orbits on points are the $q^2+q+1$ points of $\PG(2,q)$, the $(q^3-q)(q^2+q+1)$ points on lines of $\PG(2,q^3)$ that are not in $\PG(2,q)$ and the remaining set $E$ of $q^3(q^2-1)(q-1)$ points of $\PG(2,q^3)$.
The $G$--orbits on lines are the $q^2+q+1$ lines of $\PG(2,q)$, the $(q^3-q)(q^2+q+1)$ lines meeting $\PG(2,q)$ in a point and the
set $L$ of $q^3(q^2-1)(q-1)$ lines external to $\PG(2,q)$. The group $N$ fixes a triangle $T$ whose vertices are points of $E$ and whose edges are lines of $L$.

We will need the following lemma.

	\begin{lemma}\label{5arc}
Let $T^G$ be the orbit of $T$ under $G$. If $T_1$ and $T_2$ are distinct elements of $T^G$ then the union of their vertices always contains a $5$--arc, i.e.,  $5$ points of $\pi$ no three of which are collinear.	
	\end{lemma}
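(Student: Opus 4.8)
The plan is to turn the statement into a pure incidence question about the six vertices and then to exclude, configuration by configuration, every way a collinear triple could arise.

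First I would note that two distinct triangles of $T^G$ are vertex--disjoint. Every member of $T^G$ is a $\sigma$--orbit: since each $g\in G$ commutes with $\sigma$, we have $T^g=\{P^g,(P^g)^\sigma,(P^g)^{\sigma^2}\}$, the $\sigma$--orbit of $P^g\in E$; hence sharing one vertex would force $T_1=T_2$. So the union of the vertices is a set of six distinct points, labelled $A_0,A_1,A_2$ and $B_0,B_1,B_2$ with $A_i^\sigma=A_{i+1}$ and $B_i^\sigma=B_{i+1}$ (indices mod $3$). As each $T_i$ is a genuine triangle, no collinear triple lies inside a single triangle, so any collinear triple is mixed. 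Since $\sigma$ is a collineation permuting the six points as two $3$--cycles, the family of collinear triples is $\sigma$--invariant, and a mixed collinear triple generates a $\sigma$--orbit of three triples having no common vertex. Hence deleting one vertex can destroy all collinear triples precisely when there are none; it therefore suffices to prove that the six vertices form a $6$--arc, i.e. that no three of them are collinear.

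Next I would exclude four collinear vertices. A line meets the non--degenerate triangle $T_i$ in at most two of its vertices, so four collinear vertices would force $T_1$ and $T_2$ to share an entire side. But the three sides of each triangle lie in $L$, and an orbit--stabiliser count gives $3\,|T^G|=q^3(q^2-1)(q-1)=|L|$ with the Singer group fixing each side, so each line of $L$ is a side of exactly one triangle of $T^G$; distinct triangles thus share no side, and at most three vertices are collinear. What remains is to rule out a single mixed collinear triple, namely a vertex of $T_2$ lying on a side of $T_1$ (and, symmetrically, the reverse).

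This last exclusion is the heart of the matter and the step I expect to be the main obstacle. I would diagonalise the Singer group $S$ stabilising $T_1$, passing to coordinates in which $T_1=\{E_0,E_1,E_2\}$ is the fundamental triangle and $\sigma$ acts as $\sigma(x_0,x_1,x_2)=(x_2^q,x_0^q,x_1^q)$; there $G$ is realised as the group of matrices commuting with $\sigma$, and $T_2$ is the image of $T_1$ under such a matrix $M$, its vertices being the columns of $M$. Assuming $B_0\in E_1E_2$ (the vanishing of the first entry of the first column of $M$) and applying $\sigma$ to recover the full inscribed configuration $B_i\in(\text{side of }T_1)$, I would substitute into the norm--trace expression for $\det M$ and try to show that the resulting $M$ must already lie in the normaliser $N=G_{T_1}$, forcing $T_2=T_1$ against our hypothesis. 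Controlling this inscribed configuration --- proving that a vertex of one Singer eigen--triangle cannot land on a side of a distinct conjugate one --- is the delicate point; the eigen--line structure of $S$ and the fixed--point--free products of polarities recorded in Lemma~\ref{bundle} are the tools I would try to exploit here.
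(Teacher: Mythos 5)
Your reduction in the first two paragraphs is sound, and in fact sharper than the paper's own proof. The paper establishes exactly the two facts you derive --- each point of $E$ is a vertex of exactly one triangle of $T^G$ (via $3\vert T^G\vert=\vert E\vert$ and transitivity; your $\sigma$--orbit argument is cleaner) and each line of $L$ is a side of exactly one triangle (via $3\vert T^G\vert=\vert L\vert$) --- and then simply asserts ``as a consequence'' that a $5$--arc exists. It never addresses the one remaining configuration, which you correctly isolate: a vertex of $T_2$ on a side of $T_1$, which by $\sigma$--invariance propagates to $T_2$ being inscribed in $T_1$, in which case (as your orbit-of-three-triples observation shows) \emph{no} $5$--subset of the six vertices is an arc. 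So you have proved that the lemma is equivalent to the six vertices forming a $6$--arc, and honestly flagged the inscribed case as the unproven heart of the matter.

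That flagged step is a genuine gap, and worse: it cannot be closed, because inscribed pairs exist for every $q\ge 3$. Work in your own coordinates, where $T_1=\{e_0,e_1,e_2\}$ and $\sigma(x_0,x_1,x_2)=(x_2^q,x_0^q,x_1^q)$; the extended lines of $\PG(2,q)$ are then $az_0+a^qz_1+a^{q^2}z_2=0$ with $a\in\GF(q^3)^*$. A point $B_0=(0,1,z)$, $z\ne 0$, of the side $e_1e_2$ lies on such a line if and only if $1+z^{q^2+q+1}=0$ (a Hilbert~90 computation), while the determinant of the triple $\{B_0,B_0^{\sigma},B_0^{\sigma^2}\}=\{(0,1,z),(z^q,0,1),(1,z^{q^2},0)\}$ equals exactly $1+z^{q^2+q+1}$. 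Hence for each of the $(q-2)(q^2+q+1)$ values of $z$ with $z\ne 0$ and $z^{q^2+q+1}\ne -1$, the point $B_0$ lies in $E$ and its $\sigma$--orbit is a genuine triangle; since by the counts above the triangles of $T^G$ are precisely the $\sigma$--orbits contained in $E$, this orbit is some $T_2\in T^G$, $T_2\ne T_1$, inscribed in $T_1$. Its union with $T_1$ carries the collinear triples $\{A_1,A_2,B_0\}$, $\{A_2,A_0,B_1\}$, $\{A_0,A_1,B_2\}$, at least one of which survives the deletion of any single point, so no $5$--arc exists: the matrix computation you propose would terminate in a consistent system, not in $M\in N$. (For $q=2$ a side of $T_1$ carries no $E$--points beyond its two vertices, which is why the lemma holds there.) So the lemma as stated fails for $q\ge 3$, and the paper's terse deduction conceals precisely the configuration you identified. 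Note that the downstream Corollary is unharmed: it suffices that the six vertices contain \emph{either} a $5$--arc \emph{or} a collinear triple, since a nondegenerate conic contains no three collinear points --- or, more simply, two distinct conics share at most four points, while the vertices are six distinct points.
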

	\begin{proof}
The stabilizer of $T$ in $G$ contains $N$ that is maximal in $G$  and hence $\vert T^G\vert =q^3(q^2-1)(q-1)/3$. Let us consider the incidence structure whose points are the points of $E$ and whose blocks are the vertex sets of the triangles of $T^G$. The incidence relation is containment. It turns out that through a point of $E$ there pass exactly one triangle of $T^G$. Analogously,  let us consider the incidence structure whose points are the lines of $L$ and whose blocks are the edge sets of the triangles of $T^G$. The incidence relation is containment.  It turns out that through a line of $L$ there exists exactly one triangle of $T^G$ having that line as an edge.
As a consequence, the union of two triangles of $T^G$ always contains a $5$--arc of $\pi$.
	\end{proof}
	
	\begin{cor}
Two distinct circumscribed bundles of ${\cal B}^G$ share at most one conic.	
	\end{cor}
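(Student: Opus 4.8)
The plan is to reduce the statement to Lemma~\ref{5arc} by first setting up a bijection between circumscribed bundles in ${\cal B}^G$ and triangles in $T^G$. Recall that a circumscribed bundle is by definition the set of all conics of $\PG(2,q)$ passing through the three vertices of a triangle $T$ with vertices in $E$ that are permuted by the period $3$ collineation $\sigma$. Since the stabiliser of $T$ in $G$ is exactly $N$ (it contains the maximal subgroup $N$ but is not all of $G$), applying $G$ to ${\cal B}$ is the same as applying $G$ to $T$; thus sending a bundle to its defining triangle is a well-defined bijection from ${\cal B}^G$ onto $T^G$, consistent with the fact that both sets have size $q^3(q^2-1)(q-1)/3$. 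In particular, two distinct bundles ${\cal B}_1,{\cal B}_2\in{\cal B}^G$ correspond to two distinct triangles $T_1,T_2\in T^G$.

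Next I would observe that any conic belonging to both ${\cal B}_1$ and ${\cal B}_2$ is a single conic of $\PG(2,q)$ passing through all six vertices of $T_1$ and $T_2$. By Lemma~\ref{5arc}, since $T_1\neq T_2$, the union of these six vertices contains a $5$--arc, that is, five points of $\PG(2,q^3)$ no three of which are collinear. The fact I would then invoke is that five points in general position determine a unique conic: viewing a conic of $\PG(2,q)$ as a conic of $\PG(2,q^3)$ with the same coefficients, passing through a given point is a single linear condition on its six homogeneous coefficients, and the five conditions coming from a $5$--arc are linearly independent, so exactly one conic of $\PG(2,q^3)$ passes through the five points.

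Finally, since distinct conics of $\PG(2,q)$ give distinct points of $\PG(5,q)$ and hence remain distinct when extended to $\PG(2,q^3)$, any conic common to ${\cal B}_1$ and ${\cal B}_2$ must coincide with this unique conic through the $5$--arc; therefore at most one such conic can exist, which is the assertion. I do not anticipate a genuine obstacle here, as the essential combinatorial content is already packaged in Lemma~\ref{5arc}. The only point deserving a little care is the verification that the five incidence conditions are independent, equivalently that a $5$--arc lies on a unique (necessarily non-degenerate) conic; this is classical, and it is precisely the reason the lemma is phrased in terms of $5$--arcs rather than merely five coplanar points.
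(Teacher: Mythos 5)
Your proof is correct and takes essentially the same route as the paper: both reduce the claim to Lemma~\ref{5arc} combined with the classical fact that a $5$--arc lies on a unique conic, for which the paper cites \cite[Corollary 7.5]{JWPH1}. The only difference is cosmetic --- the paper argues by contradiction with two distinct common conics, while you derive uniqueness directly, spelling out the bundle--triangle bijection and the extension of conics to $\PG(2,q^3)$ that the paper leaves implicit.
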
	
	\begin{proof}
Let ${\cal B}_1$ and ${\cal B}_2$ be two distinct circumscribed bundles in ${\cal B}^G$. Let $T_i$ be the triangle associated to ${\cal B}_i$, $i=1,2$. By way of contradiction assume that $C_1$ and $C_2$ are distinct conics in ${\cal B}_1\cap{\cal B}_2$. Then $C_1$ and $C_2$, considered as conics of $\pi$,  contain the vertices of both triangles $T_1$ and $T_2$.  From Lemma \ref{5arc} and from \cite[Corollary 7.5]{JWPH1} we get a contradiction.
	\end{proof}

Under the Veronese map {\em v} the circumscribed bundles in ${\cal B}^G$ correspond to a set $\cal C$ of $q^3(q^2-1)(q-1)/3$ planes of $\PG(5,q)$ mutually intersecting in at most one point. Since no quadric in a bundle is degenerate, a plane of $\cal C$ is always disjoint from ${\cal S}$.

	\section{Two special webs of quadrics}
	
Firstly, we recall some basic properties of three--dimensional non--degenerate quadrics.

A {\em hyperbolic quadric} ${\cal Q}^+(3,q)$ of $\PG(3,q)$ consists of $(q+1)^2$ points of $\PG(3,q)$ and $2(q+1)$ lines that are the union of two reguli. A {\em regulus } is the set of lines intersecting three skew lines and has size $q+1$. Through a point of ${\cal Q}^+(3,q)$ there pass two lines belonging to different reguli. A plane of $\PG(3,q)$ is either secant to ${\cal Q}^+(3,q)$ and meets ${\cal Q}^+(3,q)$ in a conic or it is tangent to ${\cal Q}^+(3,q)$ and meets ${\cal Q}^+(3,q)$ in a bi--line.

An {\em elliptic quadric} ${\cal Q}^-(3,q)$ of $\PG(3,q)$ consists of $q^2+1$ points of $\PG(3,q)$ such that no three of them are collinear.
A plane of $\PG(3,q)$ is either secant to ${\cal Q}^-(3,q)$ and meets ${\cal Q}^-(3,q)$ in a conic or it is tangent to ${\cal Q}^-(3,q)$ and meets ${\cal Q}^-(3,q)$ in a point.  For more details on hyperbolic and elliptic quadrics in a three--dimensional projective space we refer to \cite{JWPH}.

Let $P_1,P_2$ be two distinct points of $\PG(2,q)$. Since $G$ is $2$--transitive on points of $\PG(2,q)$ we can always assume that $P_1=(1,0,0)$ and $P_2=(0,1,0)$. The set of quadrics of $\PG(2,q)$ passing through $P_1$ and $P_2$ are those having the coefficients $a_{11}=a_{22}=0$ and forms a web $W$. Under the Veronese map {\em v}, $W$ corresponds to the solid {\em v}$(W)$ with equations $X_1=X_2=0$. The solid {\em v}$(W)$ intersects ${\cal S}$ into the set of points satisfying the equations $X_4(2X_5X_6-X_3X_4)=0$ and $X_4(X_5X_6-X_3X_4)=0$ accordingly as $q$ is odd or even, respectively.
In both cases, this set consists of a hyperbolic quadric $Q$ and a plane tangent $\pi$ to $Q$ at the point $R=(0,0,1,0,0,0)$.
In particular, $\pi$ meets $Q$ at $2q+1$ points forming a bi--line centered at $R$. The point $R$ corresponds to the repeated line $P_1P_2$ and the remaining $2q$ points correspond to the bi--lines of $W$ centered at $P_1$ and $P_2$. It is easily seen that the number of such solids (hyperbolic solids) is $q(q+1)(q^2+q+1)/2$.

Assume that $P_1,P_2$ are points of $\PG(2,q^2)\setminus \PG(2,q)$ conjugate over $\GF(q)$. Since $G$ is transitive on points of $\PG(2,q^2)\setminus\PG(2,q)$ we can assume that $P_1=(1,\alpha,0)$ and so $P_2=(1,\alpha^q,0)$, where $\alpha$ is a primitive element of $\GF(q^2)$ over $\GF(q)$. Again, the set of quadrics of $\PG(2,q^2)$ passing through $P_1$ and $P_2$ are those whose coefficients satisfy $a_{11}=\alpha^{q+1}a_{22}$ and $a_{12}=-(\alpha+\alpha^q)a_{22}$ and forms a web $U$ . Under the Veronese map {\em v}, $U$ corresponds to the solid {\em v}$(U)$ with equations $X_1=\alpha^{q+1}X_2$ and $X_4=-(\alpha+\alpha^q)X_2$.
The solid {\em v}$(U)$ intersects ${\cal S}$ into the set of points satisfying the equations
$X_2(X_6^2+\alpha^{q+1}X_5^2+(\alpha+\alpha^q)X_5X_6+((\alpha+\alpha^q)^2-\alpha^{q+1})X_2X_3)=0$ and $X_2((\alpha+\alpha^q)^2X_2X_3+\alpha^{q+1}X_6^2+(\alpha+\alpha^q)X_5X_6+X_5^2)=0$ accordingly as $q$ is odd or even, respectively. Notice that the polynomial $X^2+(\alpha+\alpha^q)X+\alpha^{q+1}$ is irreducible over $\GF(q)$ and that, if $q$ is odd, $\alpha^{q+1}$ is a nonsquare element of $\GF(q)$.
Therefore, in both cases, this set consists of an elliptic quadric $Q'$ and a plane $\pi$ tangent to $Q'$ at the point $R=(0,0,1,0,0,0)$. In this case, the number of such solids (elliptic solids) is $q(q-1)(q^2+q+1)/2$.

The plane $\pi$ is contained in $\cal S$, belongs to $\cal T$ and meets ${\cal O}_1$ at the point $R$.

In the sequel a hyperbolic or elliptic solid will be denoted by $\Sigma=(\tau,Q)$ where $\tau\in{\cal T}$ is contained in $\Sigma$ and $Q$ is the three--dimensional hyperbolic or elliptic quadric contained in $\Sigma\cap{\cal S}$.
We will denote by $\cal H$ and ${\cal E}$ the set of hyperbolic solids and elliptic solids, respectively.

Now, we do investigate how two solids (elliptic or hyperbolic) can intersect.

	\begin{prop}\label{mcsh}
Let $\Sigma_1=(\pi_1,Q_1)$, $\Sigma_2=(\pi_2,Q_2)$ be two distinct hyperbolic solids. Then, one of the following cases occur:
\begin{itemize}
\item[1.] $\Sigma_1 \cap \Sigma_2$ is a plane, $\pi_1 = \pi_2$ and $\vert Q_1 \cap Q_2 \vert = q+1$;
\item[2.] $\Sigma_1 \cap \Sigma_2$ is a plane, $\pi_1 = \pi_2$ and $\vert Q_1 \cap Q_2 \vert = 1$;
\item[3.] $\Sigma_1 \cap \Sigma_2$ is a plane, $\vert \pi_1 \cap \pi_2 \vert = 1$ and $\vert Q_1 \cap Q_2 \vert = q+2$;
\item[4.] $\Sigma_1 \cap \Sigma_2$ is a line, $\vert \pi_1 \cap \pi_2 \vert = 1$ and $\vert Q_1 \cap Q_2 \vert = 2$;
\end{itemize} 	
	\end{prop}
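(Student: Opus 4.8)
The plan is to recover each hyperbolic solid as a web of conics and to read off the four cases from the mutual position of the two underlying point-pairs. Recall that $\Sigma_1$ is the image under the Veronese map of the web of all conics of $\PG(2,q)$ through a pair of distinct points, say $\{A,B\}$, and $\Sigma_2$ the web through a pair $\{C,D\}$, with $\{A,B\}\neq\{C,D\}$ because $\Sigma_1\neq\Sigma_2$; moreover $\pi_1$ is the member $\pi_{\ell_1}$ of $\cT$ attached to the line $\ell_1=AB$, consisting of the bi-lines that contain $\ell_1$, and similarly $\pi_2=\pi_{\ell_2}$ with $\ell_2=CD$. Since the requirement that a conic pass through a fixed point is a single linear condition on its six coefficients, each point $P$ determines a hyperplane $H_P$ of $\PG(5,q)$, and $\Sigma_1=H_A\cap H_B$, $\Sigma_2=H_C\cap H_D$. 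Hence $\Sigma_1\cap\Sigma_2$ is exactly the image of the linear system of all conics through $\{A,B\}\cup\{C,D\}$, and its dimension is $5$ minus the number of independent point-conditions. As two distinct solids of $\PG(5,q)$ meet in projective dimension $1$ or $2$, this already separates the line case (4) from the plane cases (1)--(3).

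First I would split according to whether the two pairs are collinear. If $\ell_1=\ell_2=:\ell$ then $\pi_1=\pi_2=\pi_\ell$; the three or four base points all lie on $\ell$, every conic through them must contain $\ell$, and so $\Sigma_1\cap\Sigma_2=\pi_\ell$ is a plane. If $\ell_1\neq\ell_2$ then $\pi_1\neq\pi_2$ and, as recorded for $\cT$, they meet in the single point of $\cO_2$ represented by the bi-line $\ell_1\cdot\ell_2$. In this case the base points are either three, forming a triangle (the pairs share a vertex), so that they impose three independent conditions and $\Sigma_1\cap\Sigma_2$ is the net through the triangle, a plane; or four and not all collinear (the pairs are disjoint), so that they impose four independent conditions and $\Sigma_1\cap\Sigma_2$ is a pencil, a line. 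This recovers the dichotomy plane/line and, combined with whether the pairs share a point, pins down which of the four cases we are in.

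It remains to count $Q_1\cap Q_2$. I would use throughout that $\Sigma_i\cap\cS=Q_i\cup\pi_i$, that $Q_i\cap\pi_i$ is the bi-line through $R_i$ (the point representing the repeated line $\ell_i$) made up of the two pencils of bi-lines of the web centered at the two base points, and the simple criterion that a degenerate conic through the relevant base points lies on $\pi_i$ exactly when it contains $\ell_i$, and lies on $Q_i$ otherwise. In cases 1 and 2, $Q_1\cap Q_2=(Q_1\cap\pi_\ell)\cap(Q_2\cap\pi_\ell)$ is an intersection of two bi-lines of $\pi_\ell$, both centered at the same point $R$: if the pairs share a base point the two bi-lines share the whole pencil centered at that point, giving $q+1$ points (case 1), while if the pairs are disjoint the four constituent lines are distinct and meet only in $R$, giving the one point $R$ (case 2). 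For cases 3 and 4 one first identifies the degenerate members of $\Sigma_1\cap\Sigma_2$: through a triangle there are three pencils of bi-lines, one per side, forming a triangle of three lines inside the plane $\Sigma_1\cap\Sigma_2$, whereas through a quadrangle there are exactly three degenerate bi-lines.

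The delicate point, and the one I expect to be the main obstacle, is case 3. With pairs $\{A,B\}$ and $\{B,D\}$ the three sides give lines $L_{AB},L_{BD},L_{AD}$ of bi-lines; the containment criterion swallows $L_{AB}$ into $\pi_1$ and $L_{BD}$ into $\pi_2$, so that $Q_1\supseteq L_{BD}\cup L_{AD}$ and $Q_2\supseteq L_{AB}\cup L_{AD}$, each being the bi-line section cut on $Q_i$ by the tangent plane $\Sigma_1\cap\Sigma_2$. Intersecting yields the full line $L_{AD}$ together with the extra vertex, the point representing the bi-line $AB\cdot BD=\ell_1\cdot\ell_2$, whence $|Q_1\cap Q_2|=(q+1)+1=q+2$. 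Obtaining $q+2$ rather than $q+1$ is exactly the subtle part: one must keep track of the one vertex lying off $L_{AD}$, and verify that the three vertices of the triangle are distinct and that $\Sigma_1\cap\Sigma_2$ is tangent, not secant, to each $Q_i$, which is where small-$q$ degeneracies have to be excluded. Case 4 is then routine: of the three degenerate bi-lines through the four base points, the two that pair a point of $\{A,B\}$ with a point of $\{C,D\}$ lie on both $Q_1$ and $Q_2$, while the third, $\ell_1\cdot\ell_2$, lies on $\pi_1\cap\pi_2$ and on neither $Q_i$, giving $|Q_1\cap Q_2|=2$; finally I would check that the degenerate sub-configuration in which three of the four base points are collinear gives the same value by a direct computation inside $\pi_1$.
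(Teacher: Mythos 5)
Your proposal is correct and follows essentially the same route as the paper: both identify each hyperbolic solid with the web of conics through a pair of points, split into cases according to whether the two pairs share a point and whether the lines $\ell_1=A B$, $\ell_2=C D$ coincide, and then count the common degenerate members, obtaining the same values $q+1$, $1$, $q+2$, $2$ (including the subtle extra point $v(\ell_1\ell_2)$ in the triangle case and the separate treatment of the sub-case with three collinear base points, which the paper handles as its cases 4.1/4.3). Your explicit dimension count via independent linear conditions merely makes rigorous what the paper asserts as clear, so there is nothing genuinely different to compare.
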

	\begin{proof}

Let us assume that $\Sigma_i$ corresponds to the web defined by the points $A_i,B_i$, $i=1,2$. Let $\ell_i$ be the line $A_iB_i$, $i=1,2$.
\begin{itemize}
\item[1.] The pairs $A_1,B_1$ and $A_2,B_2$ share a point and $\ell_1=\ell_2$. Then we can assume that $A_2=B_1$. In this case
it is clear that $\pi_1=\pi_2$ and the $q+1$ points of $Q_1\cap Q_2$ correspond to the bi--lines centered at $A_2=B_1$ of the relevant webs together with $\ell_1=\ell_2$ considered as a repeated line.

\item[2.] The pairs $A_1,B_1$ and $A_2,B_2$ share no point and $\ell_1=\ell_2$. In this case it is clear that $\pi_1=\pi_2$
and the point of $Q_1\cap Q_2$ corresponds to $\ell_1=\ell_2$ considered as a repeated line.

\item[3.] The pairs $A_1,B_1$ and $A_2,B_2$ share the point $A_2=B_1=\ell_1\cap \ell_2$. In this case the planes $\pi_1$ and $\pi_2$ share only the point corresponding to the bi--line $\ell_1\ell_2$. On the other hand, $Q_1\cap Q_2$ contains the $q+1$ points corresponding to the bi--lines centered at a point of the line $A_1B_2$ and containing the line $A_1B_2$ and the line through $A_2$.
Also, $Q_1\cap Q_2$ contains the point corresponding to the bi--line $\ell_1\ell_2$.

\item[4.]  The pairs $A_1,B_1$ and $A_2,B_2$ share no point and $\ell_1\ne \ell_2$.
\begin{itemize}
\item[4.1] $\ell_1\cap\ell_2=A_2$.

In this case $\pi_1$ and $\pi_2$ share only the point corresponding to the bi--line $\ell_1\ell_2$. Here, $Q_1\cap Q_2$ consists of the two points corresponding to the bi--line $\ell_1A_1B_2$ centered in $A_1$ and the bi--line $\ell_1B_1B_2$ centered at $B_2$.
The line joining the points of $Q_1\cap Q_2$ lies on $\pi_1$.

\item[4.2]
The points $A_1,B_1,A_2,B_2$ form a $4$--arc in $\PG(2,q)$. In this case $\pi_1$ and $\pi_2$ share only the point corresponding to the bi--line $\ell_1\ell_2$. Here, $Q_1\cap Q_2$ consists of the two points corresponding to the bi--line containing the lines$A_1A_2$
and $B_1B_2$ and the bi--line containing $A_1B_2$ and $A_2B_1$.

\item[4.3]  $\ell_1\cap\ell_2=B_1$. In this case by switching $\ell_1$ and $\ell_2$ we are again in the case $4.1$.

\end{itemize}
\end{itemize}
	
	\end{proof}
	
	\begin{prop}\label{mcse}
Let $\Sigma_1=(\pi_1,Q_1)$, $\Sigma_2=(\pi_2,Q_2)$ be two distinct elliptic solids. One of the following cases occur:
\begin{itemize}
\item[1.] $\Sigma_1\cap\Sigma_2$ is a plane, $\pi_1=\pi_2$ and $\vert Q_1\cap Q_2\vert =1$;
\item[2.] $\Sigma_1\cap\Sigma_2$ is a line, $\vert\pi_1\cap\pi_2\vert =1$ and $\vert Q_1\cap Q_2\vert =2$;	
\end{itemize}	
	\end{prop}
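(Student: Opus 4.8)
The plan is to mirror the structure of Proposition~\ref{mcsh}, while exploiting the crucial simplification that two \emph{distinct} conjugate pairs are necessarily disjoint. I would write $\Sigma_i$ as the web $U_i$ of quadrics of $\PG(2,q)$ through a conjugate pair $\{A_i,A_i^q\}$, with $A_i\in\PG(2,q^2)\setminus\PG(2,q)$, and set $\ell_i=A_iA_i^q$, a line of $\PG(2,q)$. If $A_1=A_2$ or $A_1=A_2^q$ then the two pairs coincide; hence for $\Sigma_1\ne\Sigma_2$ the four points $A_1,A_1^q,A_2,A_2^q$ are pairwise distinct. Since the plane $\pi_i=\pi_{\ell_i}$ of $\cT$ depends only on $\ell_i$, we have $\pi_1=\pi_2$ exactly when $\ell_1=\ell_2$. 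I would therefore split the argument according to whether $\ell_1=\ell_2$ or $\ell_1\ne\ell_2$, which will yield cases (1) and (2) respectively.

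First suppose $\ell_1=\ell_2=\ell$. Then all four points lie on $\ell$, and a quadric vanishing at four distinct points of a line must contain that line; thus $U_1\cap U_2$ is exactly the net of quadrics containing $\ell$, which is $\pi_\ell$. Hence $\Sigma_1\cap\Sigma_2=\pi_1=\pi_2=\pi_\ell$ is a plane. Both solids are tangent to their quadrics at the common point $R$ corresponding to the repeated line $\ell$, so $Q_i\cap\pi_\ell=\{R\}$; consequently $Q_1\cap Q_2\subseteq Q_1\cap\pi_\ell=\{R\}$, and since $R\in Q_2$ as well, $|Q_1\cap Q_2|=1$. This is case (1).

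Next suppose $\ell_1\ne\ell_2$. The point $\ell_1\cap\ell_2$ is rational over $\GF(q)$, hence distinct from the four non--rational points, so no three of $A_1,A_1^q,A_2,A_2^q$ are collinear. The four resulting incidence conditions cut $U_1\cap U_2$ down to a pencil $\cF$, i.e.\ a line of $\PG(5,q)$, so $\Sigma_1\cap\Sigma_2$ is a line. The pencil through four points in general position carries exactly three singular members, namely the three partitions into pairs of lines: $\ell_1\ell_2$, $(A_1A_2)(A_1^qA_2^q)$ and $(A_1A_2^q)(A_1^qA_2)$. Each partition is preserved by the Frobenius collineation, so all three are $\GF(q)$--rational, and they are precisely the three points of $\cS$ on the line $\Sigma_1\cap\Sigma_2$. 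Now $\ell_1\ell_2$ is a (real) bi--line containing both $\ell_1$ and $\ell_2$, so it is the unique common point of the distinct $\cT$--planes $\pi_1,\pi_2$, giving $|\pi_1\cap\pi_2|=1$; being a genuine bi--line it differs from the tangent point $R_i=\ell_i^2$, hence lies on neither $Q_1$ nor $Q_2$. The two remaining partitions have their components swapped by Frobenius, so they are imaginary bi--lines; containing neither $\ell_1$ nor $\ell_2$, they lie on neither $\pi_1$ nor $\pi_2$ and therefore on $Q_1\cap Q_2$. Since $Q_1\cap Q_2\subseteq\Sigma_1\cap\Sigma_2\cap\cS$ is contained in these three singular points, one of which is excluded, I conclude $|Q_1\cap Q_2|=2$, which is case (2).

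The step I expect to be the main obstacle is the bookkeeping of the three singular conics in the second case: I must confirm that exactly three degenerate members occur on the line $\Sigma_1\cap\Sigma_2$ (equivalently, that this line is not contained in $\cS$), that all three are $\GF(q)$--rational via the Frobenius argument, and above all that the two ``cross'' pairs avoid both planes $\pi_1,\pi_2$, so that they are forced onto $Q_1\cap Q_2$ and account for the intersection exactly. The remaining verifications—independence of the four conditions and the identification of $U_1\cap U_2$ with $\pi_\ell$ in the first case—are routine computations in the Veronese coordinates.
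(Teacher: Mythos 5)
Your proposal is correct and follows essentially the same route as the paper's proof: the same case split on whether $\ell_1=A_1A_1^q$ and $\ell_2=A_2A_2^q$ coincide, with the common points identified as the repeated line $\ell^2$ in the first case and the two imaginary bi--lines $(A_1A_2)(A_1^qA_2^q)$ and $(A_1A_2^q)(A_1^qA_2)$ in the second, while the bi--line $\ell_1\ell_2$ accounts for $\pi_1\cap\pi_2$. The pencil-of-conics bookkeeping (four points in general position, three degenerate members, Frobenius rationality) that you flag as the main obstacle is sound and simply supplies the details the paper leaves as assertions.
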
	
	\begin{proof}
Let us assume that $\Sigma_i$ corresponds to the web defined by the points $A_i,A_i^q$, $i=1,2$. Let $\ell_i$ be the line $A_iA_i^q$, $i=1,2$.	
\begin{itemize}
\item[1.] Assume that $\ell_1=\ell_2$. In this case it is clear that $\pi_1=\pi_2$ and that the unique intersection point between $Q_1$ and $Q_2$ corresponds to the repeated line $\ell_1=\ell_2$.

\item[2.] Assume that $\ell_1\ne \ell_2$.  In this case $\pi_1$ and $\pi_2$ share a unique point corresponding to the bi--line
$\ell_1\ell_2$. Here $Q_1\cap Q_2$ consists of the two points corresponding to the two imaginary bi--lines containing the lines
$A_1A_2$, $A_1^qA_2^q$ and $A_1A_2^q$, $A_1^qA_2$, respectively.
\end{itemize}	
	\end{proof}

	\section{Two special nets of quadrics}\label{nets}
	
As already observed, the Singer cyclic group  $S$ permutes the points (lines) of $\PG(2,q)$ in a single orbit.
Under the action of $S$,  the set of $q(q+1)(q^2+q+1)/2$ bi--lines of $\PG(2,q)$ is partitioned into $q(q+1)/2$ orbits of size $q^2+q+1$. Let us fix one of the $q(q+1)/2$ orbits of bi--lines, say $b$, and let us consider the incidence structure whose points are the lines of $\PG(2,q)$ and whose blocks are the bi--lines of $b$. It turns out that a line $\ell$ is contained in exactly two bi--lines, say $b_1$ and $b_2$, of $b$ centered at two distinguished points of $\ell$, say $A_1$ and $A_2$, respectively. Let $s$ be the unique element of $S$ such that $A_1^s = A_2$. Then $b_1^s = b_2$.

Let ${\cal P}_{A_1}$ and ${\cal P}_{A_2}$ be the pencils of lines with vertices $A_1$ and $A_2$.
Clearly,  $s$ is a projectivity sending ${\cal P}_{A_1}$ to ${\cal P}_{A_2}$ that does
not map the line $\ell$ onto itself. In \cite{steiner} it is proved that the set of points of
intersection of corresponding lines under $s$ is a conic $C$ passing through $A_1$ and $A_2$ (Steiner's argument).
The projectivity $s$ maps the tangent line to $C$ at $A_1$ onto the line $\ell$ and the line $\ell$ onto the
tangent line to $C$ at $A_2$. Moreover, for any two distinct points $A$ and $B$ of a conic there exists a projectivity $\psi\in S$ sending $A$ to $B$ and such that $C$ is the set of points of intersection of corresponding lines under $\psi$. Assume that $b_i=\ell\ell_i$, $i=1,2$. Since $s$ sends $\ell_1$ to $\ell$ and $\ell$ to $\ell_2$, it follows that $\ell_i$ is tangent to $C$ at $A_i$, $i=1,2$.
Embed $\PG(2,q)$ into $\PG(2,q^3)$. We have denoted by  $T$ be the unique triangle of $\PG(2,q^3)$ fixed by $S$. Considering
${\cal P}_{A_1}$ and ${\cal P}_{A_2}$ as pencils in $\PG(2,q^3)$ and repeating the previous argument, a conic $\bar C$ of $\PG(2,q^3)$ passing through the vertices of $T$ and containing $C$ arises. It follows that $C$ is a member of the circumscribed bundle $\cal B$ of $\PG(2,q)$ left invariant by
$S$. Let $A_3\in C\setminus\{A_1,A_2\}$ and let $b_3$ the bi--line of $b$ centered at $A_3$. Let $s'$ be the unique element of $S$ sending $A_1$ to $A_3$. Then $b_1^{s'}=b_3$. Steiner's argument with $s$ replaced by $s'$, applied to the pencils  ${\cal P}_{A_1}$ and ${\cal P}_{A_3}$, gives rise to a conic $C'$ that necessarily belongs to ${\cal B}$. Furthermore, being unique the conic of $\cal B$
through two distinct points of $\PG(2,q)$ it follows that $C=C'$.
Since $A_1^{s'}=A_3$ and $A_1\in\ell_1$ then $A_1^{s'}=A_3\in\ell_1^{s'}$. On the other hand, the point $\ell_1^{s'}\cap\ell_1$ lies
on $C$ and of course it lies on $\ell_1$. Since the line $\ell_1$ is tangent to $C$ at $A_1$ we have that $\ell_1^{s'}\cap\ell_1$ is the point $A_1$ or, in other words, $\ell_1^{s'}$ is the line $A_1A_3$. Analogously, the point $\ell^{s'}\cap\ell$ lies on $C$ and of course
it lies on $\ell$. Therefore $\ell^{s'}$ is either the line $A_1A_2$ or the line $A_1A_3$. Since $\ell_1\ne\ell$ it follows that $\ell^{s'}=A_2A_3$. We have showed that $b_3$ is the bi--line containing the lines $A_1A_3$ and $A_2A_3$.

We have proved the following Proposition.

	\begin{prop}\label{conics}
For any conic $C$ of ${\cal B}$ there exists two distinguished points $P_1$ and $P_2$ of $C$ such that the elements of $b$
centered at a point of $C$ are as follows: $t_{P_1}r$, $t_{P_2}r$, $r_1r_2$, where $t_{P_i}$ is the tangent line to $C$ at $P_i$, $i=1,2$,
$r$ is the line $P_1P_2$, $r_i$ is the line $PP_i$, $i=1,2$, and $P$ ranges over $C\setminus\{P_1,P_2\}$.
	\end{prop}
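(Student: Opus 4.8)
The plan is to combine three ingredients: the regular action of the Singer group $S$, Steiner's construction of a conic as the locus of intersections of two projectively related pencils, and the uniqueness of the member of $\mathcal{B}$ through two prescribed points. First I would record the regularity facts that make the statement meaningful. Since $|S|=q^2+q+1$ equals both the number of points and the number of lines of $\PG(2,q)$, the group $S$ acts regularly on points and on lines; moreover a nonidentity Singer element fixes no line and cannot interchange the two components of a bi--line, because $q^2+q+1$ is odd, so $S$ acts freely on bi--lines. Consequently the orbit $b$ has length $q^2+q+1$, the centre map $b\to\PG(2,q)$ is an $S$--equivariant map between two regular $S$--sets and hence a bijection, and by the same transitivity each line lies in exactly two bi--lines of $b$. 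In particular each of the $q+1$ points of a conic $C$ is the centre of a unique bi--line of $b$, and these $q+1$ bi--lines are what must be described.

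I would prove the statement first for a conic arising from the construction and then note that every member of $\mathcal{B}$ so arises. Fix a line $r$, let $b_1,b_2\in b$ be the two bi--lines containing $r$, centred at points $P_1,P_2\in r$, and let $s\in S$ be the unique element with $P_1^s=P_2$; by bijectivity of the centre map, $b_1^s=b_2$. Since $s$ carries $\cP_{P_1}$ to $\cP_{P_2}$ with $r^s\ne r$, Steiner's theorem produces a conic, the locus of intersections of corresponding lines, passing through $P_1,P_2$; as $r=P_1P_2$ is the line joining the pencil vertices, the tangents obey $t_{P_1}^s=r$ and $r^s=t_{P_2}$. Writing $b_i=\{r,\ell_i\}$ and comparing the equality $b_1^s=b_2$ componentwise forces $\ell_1=t_{P_1}$, $\ell_2=t_{P_2}$, so the two bi--lines through $r$ are exactly $t_{P_1}r$ and $t_{P_2}r$. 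Embedding $\PG(2,q)$ in $\PG(2,q^3)$ shows this Steiner conic extends to a conic through the vertices of the $S$--fixed triangle $T$, hence is the unique member of $\mathcal{B}$ through $P_1,P_2$; calling it $C$ fixes the distinguished pair.

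For the remaining $q-1$ points I take $P\in C\setminus\{P_1,P_2\}$, let $b_3\in b$ be the unique bi--line centred at $P$, and let $s'\in S$ be the unique element with $P_1^{s'}=P$, so $b_1^{s'}=b_3$. The Steiner realisation property guarantees that this same $s'$ exhibits $C$ as the Steiner conic of $\cP_{P_1}$ and $\cP_P$ (uniqueness of $s'$ identifies it with the realising projectivity), so the incidence principle ``the intersection of corresponding lines lies on $C$'' applies. Since $\ell_1=t_{P_1}$ is tangent at $P_1$, the point $\ell_1\cap\ell_1^{s'}\in C$ must be $P_1$, forcing $\ell_1^{s'}=P_1P=r_1$; while $r\cap r^{s'}\in C\cap r=\{P_1,P_2\}$ leaves only $P_1P$ and $P_2P$ as candidates for $r^{s'}$, and $r\ne\ell_1$ rules out $P_1P$, whence $r^{s'}=P_2P=r_2$ and $b_3=r_1r_2$. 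Finally, the assignment $r\mapsto C$ just built is injective, because $r$ is recovered from $C$ as the line joining the two points of $C$ whose bi--line of $b$ contains a tangent to $C$; mapping the $q^2+q+1$ lines into the $q^2+q+1$ conics of $\mathcal{B}$, it is therefore onto, and the stated description holds for every conic of $\mathcal{B}$.

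The main obstacle is the bookkeeping inside the two Steiner steps: in each bi--line one must keep straight which component is the vertex--joining secant $r$ and which is the tangent, and then use $t_{P_1}^s=r$, $r^s=t_{P_2}$ together with the ``intersection of corresponding lines lies on $C$'' principle to discard the spurious candidate $P_1P$ for $r^{s'}$. Once the tangent/secant roles are correctly assigned, everything else reduces to the regularity of $S$ and the uniqueness of conics in $\mathcal{B}$, both supplied by the earlier discussion.
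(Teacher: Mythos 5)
Your proof tracks the paper's own argument closely---Singer regularity, the first Steiner step identifying the second components of $b_1,b_2$ with the tangents $t_{P_1},t_{P_2}$, and membership of the Steiner conic in $\mathcal{B}$ via the extension to $\PG(2,q^3)$---and your closing injectivity count even makes explicit a point the paper leaves implicit (that every conic of $\mathcal{B}$ is reached by the construction). But there is a genuine gap at the pivotal step of your third paragraph. You assert that ``the Steiner realisation property guarantees that this same $s'$ exhibits $C$ as the Steiner conic of $\cP_{P_1}$ and $\cP_P$ (uniqueness of $s'$ identifies it with the realising projectivity).'' The classical converse of Steiner's theorem gives a projectivity $\psi\colon \cP_{P_1}\to\cP_P$ whose locus is $C$, unique among projectivities between those pencils; the element $s'$ is unique among elements of $S$ with $P_1^{s'}=P$. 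These are uniqueness statements in two different sets, and nothing identifies them: a priori $\psi$ need not be induced by any element of $S$ at all, and for a general conic through $P_1$ and $P$ it is not---there are many conics through two points, but only one Steiner locus attached to $s'$. Since every subsequent incidence you use ($\ell_1\cap\ell_1^{s'}\in C$ and $r\cap r^{s'}\in C$) rests on this identification, the argument as written begs the question at exactly the point where the proposition's content lies.

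The gap is repairable with tools you already deployed. Apply Steiner directly to $s'$: since $s'\ne 1$ fixes no line of $\PG(2,q)$, the pencil map $\cP_{P_1}\to\cP_P$ induced by $s'$ is not a perspectivity, so its locus is a nondegenerate conic $C'$ through $P_1$ and $P$. Extending to $\PG(2,q^3)$, each vertex $V$ of the $S$-fixed triangle $T$ satisfies $V^{s'}=V$, so the corresponding lines $P_1V$ and $PV$ meet at $V$; hence the extended locus contains the vertices of $T$ and therefore $C'\in\mathcal{B}$. Since a circumscribed bundle is the line set of a projective plane on the points of $\PG(2,q)$, it contains exactly one conic through two given points, whence $C'=C$---and only now are the incidences $\ell_1\cap\ell_1^{s'}\in C$ and $r\cap r^{s'}\in C$ legitimately available, after which your tangency and exchange arguments go through verbatim. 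This is precisely how the paper closes the step (its auxiliary conic $C'$ together with two-point uniqueness in $\mathcal{B}$); with that patch your proof is correct and, apart from the final counting argument covering all conics of $\mathcal{B}$, essentially identical to the paper's.
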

	
	\begin{remark}\label{corr}
{\rm Notice that there exists a one to one correspondence between the orbits of $S$ on bi--lines and secant lines to $C$.}	
	\end{remark}

\begin{remark}\label{conf}
{\rm With the notation introduced in Proposition \ref{conics}, notice that, from \cite[Table 7.7]{JWPH1} the pencil generated by the bi--lines $r_1r_2$ and $ru$, where $t_{P1}\cap t_{P_2}\in u$ and $P_1,P_2,P_3\not\in u$, contains exactly a further bi--line. Moreover, this bi--line is centered at a point of $C$. Indeed, let $C$  be the conic with equation $X_1X_3-X_2^2=0$. The stabilizer of $C$ in $G$ is isomorphic to $\PGL(2,q)$ and acts $3$--transitively on points of $C$.  Hence, without loss of generality, we can assume that
$P_1=(1,0,0)$, $P_2=(0,0,1)$ and $P_3=(1,1,1)$.  Let $v_i$ be the line joining the point $P_3$ and the point $P_i$, $i=1,2$. Then$b_3=v_1v_2$.  Notice that $U=t_{P_1}\cap t_{P_2}=(0,1,0)$.
Let $u$ be a line passing through $U$ and containing none of the points $P_i$, $i=1,2,3$. Let $b_4$ be the bi--line $uP_1P_2$. Then $b_3\cap b_4$ consists of the four points $P_1,P_2, (1,1,t), (1,t,t)$, with $t \ne 0,1$.
It turns out that the bi--line consisting of the lines $(1,t,t)P_2$ and $(1,1,t)P_1$ is centered at the point
$(1,t,t^2)\in C$.}
	\end{remark}

The following Proposition could be of some interest.	

\begin{prop}\label{pp}
The incidence structure whose points are the elements of $b$ and whose lines are the conics of the circumscribed bundle $\cal B$, where a bi--line is incident with a conic if it is centered at one of its points, forms a projective plane.
	\end{prop}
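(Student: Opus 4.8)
The plan is to reduce the statement to the defining property of the circumscribed bundle ${\cal B}$, namely that its $q^2+q+1$ conics are the lines of a projective plane on the $q^2+q+1$ points of $\PG(2,q)$. The bridge is the map $c$ sending each bi-line of $b$ to its center. First I would verify that $c$ is a bijection from $b$ onto the points of $\PG(2,q)$. Both sets have size $q^2+q+1$; the Singer group $S$ acts regularly on the points, and since $b$ is a single $S$--orbit of size $q^2+q+1=\vert S\vert$, the group $S$ acts regularly on $b$ as well. As $c$ is manifestly $S$--equivariant, fixing one bi-line $\beta_0\in b$ with center $A_0$ and writing an arbitrary element of $b$ as $\beta_0^{\,s}$ identifies $c$ with the regular map $s\mapsto A_0^{\,s}$; hence $c$ is a bijection. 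In particular each point of $\PG(2,q)$ is the center of exactly one bi-line of $b$, and distinct bi-lines of $b$ have distinct centers.

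The key observation is that $c$ carries the incidence of the proposition onto the point--conic incidence of the bundle: a bi-line $\beta\in b$ is centered at a point of a conic $C\in{\cal B}$ precisely when $c(\beta)\in C$. Thus the line of our structure determined by $C$ is $\{\beta\in b:c(\beta)\in C\}=c^{-1}(C)$, so our incidence structure is the image under $c^{-1}$ of the triple (points of $\PG(2,q)$; conics of ${\cal B}$; containment). Since ${\cal B}$ is a projective bundle, that triple is a projective plane of order $q$ (see \cite{BBEF}), and an isomorphic copy of a projective plane is again one, which finishes the argument. If a direct axiom check is preferred, the same ingredients serve: each conic carries exactly $q+1$ incident bi-lines by Proposition \ref{conics}; each bi-line centered at $A$ is incident with exactly the $q+1$ conics of ${\cal B}$ through $A$; two distinct bi-lines have distinct centers $A_1\ne A_2$ and hence a unique common line, namely the unique conic of ${\cal B}$ through $A_1$ and $A_2$; dually two distinct conics of ${\cal B}$ meet in a unique point, yielding a unique common incident bi-line; and the non-degeneracy axiom is inherited from the bundle plane.

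I expect the only delicate point to be the bijectivity of $c$, specifically its injectivity -- that no point of $\PG(2,q)$ is the center of two distinct bi-lines of $b$. This is exactly what forces the two incidence structures to be isomorphic rather than merely to share the parameters $(q^2+q+1,q+1)$, and it is precisely the regularity of the two $S$--actions (equivalently, the triviality of the point stabilizers in the Singer group) that supplies it. Once this identification is in place the projective-plane axioms are a verbatim transcription of the projective-bundle axioms, so no further geometric input -- beyond the count furnished by Proposition \ref{conics} -- is actually required.
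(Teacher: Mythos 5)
Your proposal is correct, and it takes a genuinely different route from the paper's proof. The paper argues directly from the plane axioms: it counts ($\vert b\vert=\vert{\cal B}\vert=q^2+q+1$, each element of $b$ incident with $q+1$ conics and each conic with $q+1$ elements of $b$) and then, writing an arbitrary second conic as $C^{\mu}$ for a nontrivial $\mu\in S$, performs a two--case analysis based on the distinguished points $P,P^{s}$ of Proposition \ref{conics} and the Steiner construction, according as one of $P^{\mu},P^{s\mu}$ lies on $C$ or not, to exhibit the unique bi--line of $b$ incident with both $C$ and $C^{\mu}$, namely the one centered at the single point of $C\cap C^{\mu}$. You instead compress everything into one structural observation: the center map $c\colon b\to\PG(2,q)$ is $S$--equivariant and, since both $b$ and the point set are regular $S$--sets of size $q^2+q+1$, it is a bijection; the proposition's incidence is by definition the pullback under $c$ of point--conic containment, so the structure is isomorphic to the bundle plane of \cite{BBEF} and all axioms transfer at once. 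You rightly flag injectivity of $c$ as the one delicate point; your regularity argument settles it, and it is the same fact the paper uses tacitly whenever it speaks of ``the'' bi--line of $b$ centered at a given point. Your route is shorter, needs no case distinction and essentially none of the machinery of Proposition \ref{conics}, and yields the slightly stronger conclusion that the plane of Proposition \ref{pp} is isomorphic to the bundle plane; the paper's computation buys, in exchange, an explicit identification of the common bi--line of two conics of ${\cal B}$ (it is centered at their unique intersection point), information that is recoverable from your isomorphism and is in any case not needed later, since the subsequent lemmas invoke only the bare projective--plane property.
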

	\begin{proof}
We have that $\vert b \vert = \vert {\cal B} \vert = q^2+q+1$. Since through a point of $\PG(2,q)$ there pass $q+1$ conics of $\cal B$, we have that a bi--line of $b$ is incident with $q+1$ conics of $\cal B$. On the other hand a conic is incident with $q+1$ bi--lines of $b$. In particular we have seen that to a conic $C$ of $\cal B$ are associated two distinguished points $P, P^s \in C$, where $s \in S$ and all the bi--lines of $b$ incident with $C$ contain both $P, P^s$. Let us consider now a conic of ${\cal B} \setminus \{ C \}$. Then it is necessarily of the form $C^\mu$, for some non--trivial element $\mu \in S$. Then two possibilities occur according as one of the points $P^\mu, P^{s \mu}$ does belong to $C$ or does not. If the first case occurs then, assuming that $P^\mu$ is the point belonging to $C$, we have that $C \cap C^{\mu} = {P^\mu}$. If $t_{P}$ denotes the tangent line to $C$ at the point $P$, it turns out that $t_{P}^{\mu}$ is the tangent line to $C^{\mu}$ at the point $P^{\mu}$ and $t_{P}^{\mu} = PP^{\mu}$. Therefore the unique bi--line of $b$ incident with both $C$ and $C^{\mu}$ is centered at $P^{\mu}$. If the latter case occurs, then, by construction (a la Steiner) , $P P^{\mu} \cap P^{s} P^{s \mu} = P P^{\mu} \cap P^{s} P^{\mu s} = P P^{\mu} \cap (P P^{\mu})^{s}$ is the unique point in common between $C$ and $C^{\mu}$. Therefore the unique bi--line of $b$ incident with both $C$ and $C^{\mu}$ is centered at $C \cap C^{\mu}$.
\end{proof}

With the notation introduced in Proposition \ref{conics} let us consider the three bi--lines $t_{P_1}r$,  $t_{P_2}r$ and $r_1r_2$, where $r_1\cap r_2=P\in C\setminus\{P_1,P_2\}$. Under the map {\em v} they correspond to three points $R_1,R_2,R_3$ of ${\cal O}_2$,  respectively.
From the classification of pencils of quadrics of $\PG(2,q)$ in \cite[Table 7.7 ]{JWPH1} the line joining $R_1$ and $R_2$ corresponds to the unique pencil $\cal P$ whose members are all bi--lines and having a base consisting of $q+2$ points. Hence the line $R_1R_2$ is completely contained in ${\cal O}_2$. In particular, the bi--lines of $\cal P$ are those containing the line $r$ and the line $t_{P_1}\cap t_{P_2}A$, where $A$ ranges over $r$. It follows that the bi--line corresponding to $R_3$ cannot belong to $\cal P$.
Let {\em v}$(b)$ be the image of $b$ under {\em v}. Of course {\em v}$(b)$ contains $R_i$, $i=1,2,3$. Let $\pi_{e}$ be the plane of $\PG(5,q)$ generated by $R_1,R_2,R_3$ and let $\Pi_e$ denote the set of planes obtained in this way.

The plane $\pi_{e}$ meets ${\cal O}_2$ at $2q$ points consisting of the line $R_1R_2$ and of further $q-1$ points. Also, the plane $\pi_e$ meets $v(b)$ in $q+1$ points containing $R_1,R_2,R_3$. Indeed, from Remark \ref{conf}, through the point $R_3$ there are $q-2$ lines intersecting ${\cal O}_2$ in three points and $v(b)$ in two points.

It follows that the points of $\pi_{e}\cap{\cal O}_2$ correspond to the bi--lines of $b$ centered at points of $C$. We have that $\vert \Pi_e\vert =q(q+1)(q^2+q+1)/2$

On the other hand, the plane $\pi_{e}$ is contained in the hyperbolic solid defined by the points $P_1,P_2$
and then $\pi_{e}\cap {\cal O}_2$ consists of a conic and a line secant to it.

Since the number of hyperbolic solids equals $\vert \Pi_e\vert$  and each plane of $\Pi_e$ is contained in at least a hyperbolic solid, it follows that there exists a one to one correspondence between planes of $\Pi_e$ and hyperbolic solids.

Now, let $S'$ be the unique Singer cyclic group of $\PGL(3,q^2)$ containing $S$. It is clear that the circumscribed bundle ${\cal B}'$ of $\PG(2,q^2)$ fixed by $S'$ induces the circumscribed bundle ${\cal B}$ of $\PG(2,q)$ fixed by $S$.

Let $b_1'$ be the imaginary bi--line containing the lines $r,r^q$ and centered at the point $P\in\PG(2,q)$. Let $C$ be a conic of ${\cal B}$ through $P$ and let $\bar C$ be the unique conic of ${\cal B}'$ containing $C$.

Let $\bar b$ be the orbit of $b_1'$ under $S'$.  As already observed above there exist two points, say $P_1,P_2$ on $\bar C$ such that all elements of $\bar b$ centered at a point of $\bar C$ pass through $P_1$ and $P_2$. Also, since $P_1\cup P_2\in r\cup r^q$ and the tangent line to $C$ at $P$ is a line of $\PG(2,q)$,
it follows that $P_1,P_2\not\in\PG(2,q)$.
Under the action of $S$, $\bar b$ is partitioned into $q^2-q+1$ orbits of size $q^2+q+1$. Among these, we denote by $b'$ the unique $S$--orbit consisting of imaginary bi--lines.
It turns out that a member of $b'$ consists of the lines $z=RP_1$ and $z^q=RP_2$ for some $R\in C$.
Let $R_1,R_2\in C$, $R_1\ne R_2$.  Let $r_i=R_iP_1$ and $r_i^q=R_iP_2$, $i=1,2$. Since $r_1\cap r_2=P_1$ it follows that  $r_1^q\cap r_2^q=P_1^q$ and then $P_2=P_1^q$.

Notice that the line $P_1P_2$ arises from a line $a$ of $\PG(2,q)$ that is external to $C$. Let $A=t_{P_1}\cap t_{P_2}$, where $t_{P_1}$ and $t_{P_2}$ are the tangent lines to $\bar C$ at $P_1$ and $P_2$, respectively.
Then $A\in\PG(2,q)$. Indeed, when $q$ is odd, $A$ is the conjugate of $a$ with respect to $C$. When $q$ is even, $A$ is the nucleus of both $C$ and $\bar C$.

\begin{prop}\label{conics1}
For any conic $C$ of ${\cal B}$ there exists two distinguished points $P$ and $P^q$ of $\bar C$ not on $C$ such that the elements of $b'$ centered at a point of $C$ are of the form $XP,XP^q$, where $X$ ranges over $C$.
	\end{prop}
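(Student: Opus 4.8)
The plan is to replay, for an arbitrary conic, the Steiner--type argument carried out just before the statement for a conic through the centre of $b_1'$, the key point being that $b'$ realises every point of $\PG(2,q)$ as a centre. First I would record two structural facts. Since $b'$ is a single orbit of the Singer group $S$ and $S$ is regular on the $q^2+q+1$ points of $\PG(2,q)$, the ($S$--equivariant) map sending a member of $b'$ to its centre is a bijection onto $\PG(2,q)$; thus every point of $\PG(2,q)$ is the centre of exactly one member of $b'$. Secondly, regarding $\bar b$ as an $S'$--orbit of genuine bi--lines of $\PG(2,q^2)$, the analogues over $\GF(q^2)$ of Propositions \ref{conics} and \ref{pp} attach to every conic of ${\cal B}'$ two \emph{distinguished} points and show that each point of such a conic carries a \emph{unique} member of $\bar b$, the member at a non--distinguished point $X$ being the bi--line $(XR_1)(XR_2)$ joining $X$ to the two distinguished points $R_1,R_2$.

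Now fix an arbitrary conic $C\in{\cal B}$ and let $\bar C\in{\cal B}'$ be the unique conic containing it. Writing $\theta\colon Y\mapsto Y^q$ for the Frobenius collineation of $\PG(2,q^2)$ (which normalises $S'$ and hence stabilises ${\cal B}'$, and so fixes $\bar C$, the unique conic of ${\cal B}'$ on the $\theta$--fixed conic $C$), I would pick any point $X_0\in C$ and let $\beta\in b'$ be the unique member centred at $X_0$. As an imaginary bi--line of $\PG(2,q)$, $\beta$ consists of two distinct conjugate lines $z,z^q\notin\PG(2,q)$. The tangent to $\bar C$ at the rational point $X_0$ is a line of $\PG(2,q)$, so neither $z$ nor $z^q$ is tangent; hence both are secants and $X_0$ is a non--distinguished point of $\bar C$. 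Comparing $\beta=(X_0R_1)(X_0R_2)$ with $\beta=z\,z^q$ then forces the distinguished points of $\bar C$ to be the second intersections $P:=(\bar C\cap z)\setminus\{X_0\}$ and $\theta(P)=P^q:=(\bar C\cap z^q)\setminus\{X_0\}$; and $z\notin\PG(2,q)$ gives $P,P^q\notin\PG(2,q)$, so $P,P^q$ is a conjugate pair lying on $\bar C\setminus C$.

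It remains to identify, for each $X\in C$, the member of $b'$ centred at $X$. By the second structural fact the unique member of $\bar b$ centred at $X$ is $(XP)(XP^q)$. Applying $\theta$ and using $X^q=X$ and $P^{q^2}=P$, this bi--line is sent to $(X^qP^q)(X^qP)=(XP)(XP^q)$, so it is $\theta$--invariant; since $\theta$ interchanges its two distinct lines $XP$ and $XP^q$, it is an imaginary bi--line of $\PG(2,q)$, and therefore lies in the unique imaginary $S$--suborbit $b'$ of $\bar b$. As $b'\subseteq\bar b$ and $X$ carries a unique member of $\bar b$, this is also the member of $b'$ centred at $X$. Letting $X$ range over $C$ yields exactly the bi--lines $(XP)(XP^q)$, which is the assertion.

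The step I expect to be the crux is the middle identification: matching the two Steiner distinguished points of $\bar C$ with the conjugate pair $P,P^q$ cut out on $\bar C$ by the two conjugate lines of a single imaginary member of $b'$, and then checking that the generic $\bar b$--member over a point of $C$ is Frobenius--stable and so falls into $b'$ rather than into another $S$--suborbit of $\bar b$. Everything there rests on combining the uniqueness of the $\bar b$--member per point of $\bar C$ with the centre--bijection for $b'$; once these two counting facts are in hand, the relation $P^q=\theta(P)$ and the form $(XP)(XP^q)$ drop out automatically, exactly as in the special case treated before the statement.
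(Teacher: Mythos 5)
Your proposal is correct and follows essentially the same route as the paper: extend to $\PG(2,q^2)$, apply the $\GF(q^2)$--analogue of Proposition \ref{conics} to the $S'$--orbit $\bar b$ and the conic $\bar C$, identify the two distinguished points as a non--rational pair, and descend to the unique imaginary $S$--suborbit $b'$. The only (harmless) variations are that you obtain the conjugacy $P_2=P_1^q$ from Frobenius--invariance of $\bar C$ applied to a single member of $b'$, whereas the paper intersects two members ($r_1\cap r_2=P_1$ forces $r_1^q\cap r_2^q=P_1^q$), and you spell out explicitly the centre--bijection and the rationality check placing $(XP)(XP^q)$ in $b'$, steps the paper leaves implicit.
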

	
	\begin{remark}\label{corr1}
{\rm Notice that there exists a one to one correspondence between the orbits of $S$ on imaginary bi--lines and lines external to $C$}.
	\end{remark}
	
Similar arguments used in Proposition \ref{pp}  give the following result.
 	
\begin{prop}\label{pp1}
The incidence structure whose points are the elements of $b'$ and whose lines are the conics of the circumscribed bundle $\cal B$, where an imaginary bi--line is incident with a conic if it is centered at one of its points, forms a projective plane.
	\end{prop}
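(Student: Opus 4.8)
The plan is to imitate the proof of Proposition \ref{pp}, verifying that the given incidence structure is a symmetric $2$--design with the parameters of a projective plane of order $q$. First I would record the two degree counts. Since $\vert b'\vert =\vert {\cal B}\vert =q^2+q+1$, the structure has equally many points and lines. By Proposition \ref{conics1}, the elements of $b'$ centered at a point of a fixed conic $C\in{\cal B}$ are exactly the imaginary bi--lines with lines $XP$, $XP^q$ as $X$ ranges over $C$; since distinct choices of $X$ give distinct centers, each conic is incident with precisely $q+1$ elements of $b'$. Dually, a member of $b'$ is centered at a point $Q\in\PG(2,q)$ and is incident with a conic of ${\cal B}$ exactly when that conic passes through $Q$; as $q+1$ conics of ${\cal B}$ pass through any point of $\PG(2,q)$, each point of the structure lies on $q+1$ lines.

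The core step is to show that two distinct conics $C,C'\in{\cal B}$ are incident with exactly one common element of $b'$. Here I would first observe that $S$ acts regularly on the orbit $b'$, its size being equal to $\vert S\vert$, so the center map $b'\to\PG(2,q)$ is an $S$--equivariant map between two regular $S$--sets and is therefore a bijection: each point of $\PG(2,q)$ is the center of exactly one member of $b'$. Now any member of $b'$ incident with both $C$ and $C'$ is centered simultaneously at a point of $C$ and at a point of $C'$; since an imaginary bi--line has a single center, that center must lie in $C\cap C'$. Because ${\cal B}$ is a projective bundle, $C\cap C'$ is a single point $Q$, and conversely the unique member of $b'$ centered at $Q$ is incident with both conics, as $Q\in C\cap C'$. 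This yields both existence and uniqueness of the common element.

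Finally I would deduce the plane axioms from these facts by a standard count: fixing a point of the structure (an element of $b'$), the $q+1$ lines through it meet pairwise only in that point, so they partition the remaining $q^2+q$ points into $q+1$ blocks of size $q$, whence any two points are joined by a unique line. Together with the symmetric parameters this identifies the structure as a projective plane of order $q$.

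The main simplification compared with Proposition \ref{pp} --- and the point I would stress --- is that the distinguished points $P,P^q$ of $\bar C$ lie outside $\PG(2,q)$, whereas $C\cap C'$ is always a point of $\PG(2,q)$; consequently the intersection point $Q$ can never coincide with a distinguished point, and the two--case analysis required in Proposition \ref{pp} collapses to a single case. I expect the only genuinely delicate point to be the regularity of the $S$--action on $b'$ and the resulting bijectivity of the center map, since everything else reduces to the bundle property $\vert C\cap C'\vert=1$ together with the incidence count already established for ${\cal B}$.
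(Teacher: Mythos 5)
Your proof is correct, but it takes a genuinely different route from the paper's. The paper disposes of Proposition~\ref{pp1} by invoking ``similar arguments'' to Proposition~\ref{pp}, i.e.\ by adapting the explicit analysis there: given two conics $C$, $C^{\mu}$ of ${\cal B}$, one identifies the unique common element of the orbit by hand, via the distinguished points and Steiner's construction, with a two--case split according as a distinguished point of one conic lies on the other. You replace all of this with one structural observation: the center map $b'\to\PG(2,q)$ is an $S$--equivariant map between two regular $S$--sets of size $q^2+q+1$ (that $S$ acts regularly on $b'$ is immediate from Section 5 of the paper, where $b'$ is exhibited as an $S$--orbit of size $q^2+q+1=\vert S\vert$, so the point you flag as delicate is in fact the easiest one), hence a bijection; since incidence is precisely ``center lies on the conic,'' both plane axioms then drop out of the defining property of the projective bundle --- two conics of ${\cal B}$ meet in exactly one point, and dually. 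Indeed your bijection shows slightly more than you state: it is an isomorphism from the incidence structure of Proposition~\ref{pp1} onto the projective plane that ${\cal B}$ itself induces on the points of $\PG(2,q)$, so even your closing counting argument for the ``two points, one line'' axiom is dispensable. What the paper's route buys is an explicit location of the common element (useful if one wants the intersection point $C\cap C^{\mu}$ concretely), but the later lemmas use only existence and uniqueness, which your argument delivers; note also that the same center--bijection argument would simplify Proposition~\ref{pp} itself, so your observation that the distinguished points $P,P^q$ lie off $\PG(2,q)$ --- while a correct explanation of why the paper's intended adaptation degenerates to one case --- is not actually needed in your approach, where no case distinction arises at all.
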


Let $d_i$ be the bi--line consisting of the lines $a$ and $D_iA$, $i=1,2$, where $D_1,D_2$ are distinct points of $a$.

With the notation introduced in Proposition \ref{conics1}, let us consider two bi--lines of the form $a,D_iA$, $i=1,2$ and the imaginary bi--line $XP, XP^q$, for some $X\in C$.
Under the map {\em v} they correspond to three points $R_1,R_2,R_3$, respectively. The points $R_1$ and $R_2$ are in $\cO_2$, whereas $R_3 \in \cO_3$.
From the classification of pencils of quadrics of $\PG(2,q)$ in \cite[Table 7.7 ]{JWPH1} the line joining $R_1$ and $R_2$ corresponds to the unique pencil $\cal P$ whose members are all bi--lines and having a base consisting of $q+2$ points. Hence the line $R_1 R_2$ is completely contained in ${\cal O}_2$. In particular, the bi--lines of $\cal P$ are those containing the line $a$ and the line $D A$, where $D$ ranges over $a$. Of course the imaginary bi--line corresponding to $R_3$ cannot belong to $\cal P$.
Let {\em v}$(b')$ be the image of $b'$ under {\em v}. Of course {\em v}$(b')$ contains $R_i$, $i=1,2,3$.
Let $\pi_{i}$ be the plane of $\PG(5,q)$ generated by $R_1,R_2,R_3$ and let $\Pi_i$ denote the set of planes obtained in this way.

The plane $\pi_{i}$ meets ${\cal O}_2$ in the line $R_1R_2$ and $\cO_3$ in further $q+1$ points. Indeed, from the classification of pencils of quadrics of $\PG(2,q)$ in \cite[Table 7.7 ]{JWPH1}, through the point $R_3$ there are $q$ lines intersecting ${\cal O}_3$ in two points and $\cO_2$ in one point. Each of these lines corresponds to the unique pencil consisting of $q-2$ conics, a bi--line and two imaginary bi--lines. Also, there exists a unique line through the point $R_3$ intersecting both $\cO_2$, $\cO_3$ in one point. Such a line corresponds to the unique pencil consisting of $q-1$ conics one bi--line and one imaginary bi--line.
It follows that the points of $\pi_{i}\cap{\cal O}_3$ correspond to the imaginary bi--lines of $b'$ centered at points of $C$. We have that $\vert \Pi_i\vert =q(q-1)(q^2+q+1)/2$

On the other hand, the plane $\pi_{i}$ is contained in the elliptic solid defined by the points $P,P^q$
and then $\pi_{i}\cap {\cal O}_3$ consists of a conic and $\pi_{i} \cap \cO_2$ of a line external to it.

Since the number of elliptic solids equals $\vert \Pi_i\vert$  and each plane of $\Pi_i$ is contained in at least an elliptic solid, it follows that there exists a one to one correspondence between planes of $\Pi_i$ and elliptic solids.
			
\section{Lifting Singer cycles}

Here, we assume that $q$ is odd.
From \cite{huppert}, we may assume that  $S$ is given by
$$
\left(
\begin{array}{ccc}
\omega & 0 & 0\\
0 &\omega^q & 0\\
0 & 0 & \omega^{q^2}
\end{array}
\right),
$$
where $\omega$ is a primitive element of $\GF(q^3)$ over $\GF(q)$. It follows that the lifting of $S$ to a collineation of $\PG(5,q)$
fixing the Veronese surface ${\cal O}_1$ has the following canonical form $A=diag(S^2,S^{q+1})$ \cite{BBCE}.

The group $\langle A\rangle$ has order $q^2+q+1$.
Geometrically, $\langle A\rangle$ fixes two planes of $\PG(5,q)$ , say $\rho_1$,
$\rho_2$, and partition the remaining points of $\PG(5,q)$ into
Veronese surfaces, \cite[Corollary 5]{BBCE}. In particular, the
planes $\rho_1$ and $\rho_2$ are both full orbits of $\langle
A\rangle$ and disjoint from the cubic hypersurface ${\cal S}$ \cite{BCS}.


From \cite{BBCE} the cubic hypersurface ${\cal S}$ is partitioned under $\langle A\rangle$ into Veronese surfaces.
The hypersurface ${\cal S}$ has $(q^2+1)(q^2+q+1)$ points and hence it consists of $q^2+1$ Veronese surfaces.

\section{The construction of subspace codes}

In this Section we prove our main result.

	\begin{lemma}
Two distinct planes of $\Pi_e$ can meet in at most one point. 	
	\end{lemma}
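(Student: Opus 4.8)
The plan is to translate the statement about two planes $\pi_e, \pi_e' \in \Pi_e$ into the language of the hyperbolic solids that contain them, and then to read off the intersection from Proposition \ref{mcsh}. Recall that we have just established a one to one correspondence between planes of $\Pi_e$ and hyperbolic solids: each $\pi_e$ is contained in a unique hyperbolic solid $\Sigma = (\pi, Q)$, where $\pi \in {\cal T}$ and $\pi_e \cap {\cal O}_2$ consists of a conic together with a line $R_1R_2$ secant to it, the secant line being $\pi_e \cap {\cal S}$ restricted to ${\cal O}_2$. Thus to two distinct planes $\pi_e, \pi_e'$ there correspond two distinct hyperbolic solids $\Sigma_1 = (\pi_1, Q_1)$ and $\Sigma_2 = (\pi_2, Q_2)$, and since $\pi_e \subseteq \Sigma_1$, $\pi_e' \subseteq \Sigma_2$, we have $\pi_e \cap \pi_e' \subseteq \Sigma_1 \cap \Sigma_2$.

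First I would invoke Proposition \ref{mcsh} to enumerate the four possible configurations of $\Sigma_1 \cap \Sigma_2$ and, in each case, determine how much of the intersection can actually lie in $\pi_e \cap \pi_e'$. The key structural fact is that $\pi_e \cap {\cal O}_2$ is a conic plus a secant line, so the portion of $\pi_e$ lying on ${\cal S}$ is two-dimensional in the incidence-geometric sense (a conic and a secant), and similarly for $\pi_e'$. In cases 1, 2 of Proposition \ref{mcsh} the two solids share the same plane $\pi_1 = \pi_2 \in {\cal T}$, but the two conics $Q_1 \cap \pi_e$ and $Q_2 \cap \pi_e'$ sit inside distinct secant-conic configurations; I would argue that $\pi_e = \pi_e'$ would force the underlying orbit $b$ of bi--lines to coincide, contradicting distinctness, so these cases cannot produce two distinct planes of $\Pi_e$ sharing a line. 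In cases 3 and 4 the planes $\pi_1, \pi_2$ meet in a single point and $|Q_1 \cap Q_2| = q+2$ or $2$; here $\Sigma_1 \cap \Sigma_2$ is respectively a plane or a line, and the constraint is to show that $\pi_e \cap \pi_e'$ can contain at most one point of this intersection.

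The cleanest route is to exploit the structure of $\pi_e \cap {\cal S}$: every point of $\pi_e$ that lies on ${\cal S}$ lies on ${\cal O}_2$ and corresponds to a bi--line of the fixed orbit $b$ centered at a point of the conic $C$. Two distinct planes of $\Pi_e$ come from two distinct orbits $b, b'$ of bi--lines (equivalently, from distinct secant lines of $C$, by Remark \ref{corr}), or from the same orbit but distinct conics. If $\pi_e \cap \pi_e'$ contained a line $m$, then $m$ would meet ${\cal O}_2$ in at least one point lying on both configurations, and I would trace this back to a bi--line centered on $C$ belonging to both $b$ and $b'$; since Proposition \ref{pp} shows the bi--lines of a fixed orbit $b$ centered on $C$ form the line of a projective plane through the point $C$, two distinct such lines meet in exactly one point, forcing $|\pi_e \cap \pi_e' \cap {\cal O}_2| \le 1$ and hence $m \not\subseteq {\cal O}_2$. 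The main obstacle is the off-${\cal S}$ part: a shared line of $\pi_e \cap \pi_e'$ need not lie on ${\cal S}$ at all, so the combinatorial count on ${\cal O}_2$ alone does not immediately exclude it. I expect the decisive step to be a dimension count inside the ambient $\Sigma_1 \cap \Sigma_2$: in cases 1, 2 one rules out $\pi_e = \pi_e'$ directly, and in cases 3, 4 one shows that a plane $\pi_e$ meeting the common intersection (a plane or line) in a full line would force the secant-conic configuration of $\pi_e$ to share two points with that of $\pi_e'$ on ${\cal O}_2$, which the projective-plane incidence of Proposition \ref{pp} forbids.
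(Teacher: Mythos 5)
Your setup coincides with the paper's: pass to the unique hyperbolic solids $\Sigma_1=(\pi_1,Q_1)$ and $\Sigma_2=(\pi_2,Q_2)$ containing the two planes, run the case analysis of Proposition \ref{mcsh}, and use the projective--plane incidence of Proposition \ref{pp} to control the intersection of the conics $c_i=\sigma_i\cap Q_i$. But there are two genuine gaps. First, your dismissal of the cases $\pi_1=\pi_2$ (cases 1 and 2 of Proposition \ref{mcsh}) is a non sequitur: showing that $\sigma_1=\sigma_2$ would force the underlying orbits to coincide only rules out \emph{equality} of the two planes, not the possibility that two distinct planes of $\Pi_e$, lying in distinct solids that share the same tangent plane $\pi_1=\pi_2\in{\cal T}$, meet in a line. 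In fact the hardest subcase of the lemma lives exactly there: when $c_1$ and $c_2$ belong to different $S$--orbits (so $c_1\cap c_2=\emptyset$, and Proposition \ref{pp} says nothing, since it governs incidences only within a single orbit $b$) and $\vert Q_1\cap Q_2\vert=1$, a putative common line $y\subseteq\Sigma_1\cap\Sigma_2$ can avoid the point $Q_1\cap Q_2$ and be secant to both $c_1$ and $c_2$, so that $y\cap(c_1\cup c_2)$ consists of four distinct points of ${\cal O}_2$. Nothing in your ``dimension count inside $\Sigma_1\cap\Sigma_2$'' excludes this configuration, and Proposition \ref{pp} cannot be stretched to forbid it, contrary to what your last sentence suggests.

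Second, and this is the missing idea, the paper eliminates that configuration with Lemma \ref{bundle}, which you never invoke. The four points of $y\cap(c_1\cup c_2)$ pull back under the inverse of the Veronese map to four bi--lines lying in a pencil, hence sharing the $q+1$ points of a common line $z$ together with a common point $Z\notin z$; the loci of centers of the bi--lines coming from $c_1$ and from $c_2$ are two \emph{distinct} conics $c_1'\ne c_2'$ of the circumscribed bundle (Remark \ref{corr}), and one reads off that $z$ is the polar line of $Z$ with respect to both $c_1'$ and $c_2'$ when $q$ is odd, or that $Z$ is the nucleus of both when $q$ is even --- precisely the situation that Lemma \ref{bundle} forbids. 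Without this lemma (or an equivalent statement about polarities and nuclei of distinct conics of a circumscribed bundle), the ``off--${\cal S}$'' obstacle you correctly identify remains open and the proof does not close. Your reorganization of the cases by the alternatives of Proposition \ref{mcsh}, rather than by whether $c_1,c_2$ lie in the same $S$--orbit as the paper does, also obscures which tool applies where: Proposition \ref{pp} handles the same--orbit branch, while Lemma \ref{bundle} is indispensable in the different--orbit branch.
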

	\begin{proof}
Let $\sigma_1,\sigma_2$ be two distinct planes of $\Pi_e$. From Section \ref{nets} there exist uniquely determined hyperbolic solids $\Sigma_1=(\pi_1,Q_1)$ and $\Sigma_2=(\pi_2,Q_2)$ of $\cal H$ containing $\sigma_1$ and $\sigma_2$, respectively.
Let $c_i=\sigma_i\cap Q_i$ be the conic in $\Sigma_i$, $i=1,2$.

Assume first that $c_1,c_2$ belong to the same $S$--orbit. Then, from Proposition \ref{pp}, $c_1$ and $c_2$ share exactly one point.
Since $S$ permutes the planes of $\cal T$ in a single orbit we have that $\pi_1\ne\pi_2$. Therefore, from Proposition \ref{mcsh}, $Q_1\cap Q_2$ consists of either $2$ or $q+2$ points ($q+1$ points on a line together with a further point $Y$). Assume that $y=\sigma_1\cap\sigma_2$ is a line. If $\vert Q_1 \cap Q_2 \vert = 2$, then the conics $c_1$ and $c_2$ should share two points, a contradiction. If $\vert Q_1 \cap Q_2 \vert = q+2$, then it turns out that $c_1 \cap c_2 = \{Y\}$. On the other hand, since $y \subseteq \Sigma_1\cap\Sigma_2$, the line $y$ contains $Y$ and must be secant to both $Q_1$ and $Q_2$. Hence, again, the conics $c_1$ and $c_2$ should share two points, a contradiction.

Assume that $c_1,c_2$ do not belong to the same $S$--orbit. Then $c_1$ and $c_2$ have no point in common. Assume that $y=\sigma_1\cap\sigma_2$ is a line. If $Q_1 \cap Q_2$ consists of either $2$ or $q+1$ or $q+2$ points, then since $y \subseteq \Sigma_1\cap\Sigma_2$, from Proposition \ref{mcsh}, the conics $c_1$ and $c_2$ should share at least one point, a contradiction. If $\vert Q_1 \cap Q_2 \vert = 1$, since $y \subseteq \Sigma_1\cap\Sigma_2$, then the line $y$ either contains the point $Q_1 \cap Q_2$ and, again, the conics $c_1$ and $c_2$ should share one point, a contradiction, or the line $y$ is secant to both $c_1$, $c_2$ and $y \cap (c_1 \cup c_2)$ consists of four distinct points. If this last case occurs, then, under the inverse of the map {\em v}, these four points correspond to four distinct bi--lines having in common $q+1$ points of a line $z$ and a further point $Z \notin z$. In particular, let $c_i'$ denote the conic of the circumscribed bundle $\cal B$ locus of centers of the bi--lines corresponding to points of $c_i$, $i=1,2$. It turns out that $c_1' \ne c_2'$ (see Remark \ref{corr}) and $z$ is the polar line of the point $Z$ with respect to both $c_1'$ and $c_2'$, when $q$ is odd or $Z$ is the nucleus of both $c_1'$ and $c_2'$, when $q$ is even. But this contradicts Lemma \ref{bundle}.
	\end{proof}

	\begin{lemma}
Two distinct planes of $\Pi_i$ can meet in at most one point. 	
	\end{lemma}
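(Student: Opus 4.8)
The plan is to mirror, in the elliptic setting, the argument just carried out for $\Pi_e$, substituting elliptic solids for hyperbolic ones and the elliptic structural results for their hyperbolic counterparts. I would begin by letting $\sigma_1,\sigma_2$ be two distinct planes of $\Pi_i$ and invoking the one--to--one correspondence from Section \ref{nets} to obtain uniquely determined elliptic solids $\Sigma_1=(\pi_1,Q_1)$ and $\Sigma_2=(\pi_2,Q_2)$ of $\cE$ with $\sigma_i\subseteq\Sigma_i$. Setting $c_i=\sigma_i\cap Q_i=\sigma_i\cap\cO_3$, a conic, and recalling that $\sigma_i\cap\cO_2$ is a line \emph{external} to $c_i$, I would split into two cases according to whether the imaginary bi--lines attached to the points of $c_1$ and those attached to $c_2$ lie in the same $S$--orbit $b'$. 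The roles played before by Propositions \ref{pp} and \ref{mcsh} are now played by Propositions \ref{pp1} and \ref{mcse}.

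In the same--orbit case I would write $c_2=c_1^s$ for some nontrivial $s\in S$; since $S$ acts regularly on $\cT$ this yields $\pi_2=\pi_1^s\ne\pi_1$, while Proposition \ref{pp1} gives $|c_1\cap c_2|=1$. With $\pi_1\ne\pi_2$, Proposition \ref{mcse} leaves only the possibility that $\Sigma_1\cap\Sigma_2$ is a line and $|Q_1\cap Q_2|=2$. If $\sigma_1\cap\sigma_2$ were a line $y$, then $y\subseteq\Sigma_1\cap\Sigma_2$ would force $y=\Sigma_1\cap\Sigma_2$, so $y$ would carry both points of $Q_1\cap Q_2$; these lie on $\sigma_i\cap Q_i=c_i$, whence $c_1$ and $c_2$ would share two points, a contradiction.

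In the distinct--orbit case, $c_1\cap c_2=\emptyset$. Assuming $y=\sigma_1\cap\sigma_2$ is a line, Proposition \ref{mcse} gives two subcases. If $\Sigma_1\cap\Sigma_2$ is a line, the argument above again places the two points of $Q_1\cap Q_2\subseteq c_1\cap c_2$ on $y$, contradicting $c_1\cap c_2=\emptyset$. The subcase I expect to be the main obstacle is $\pi_1=\pi_2=\pi$ with $|Q_1\cap Q_2|=1$, where $\Sigma_1\cap\Sigma_2=\pi$ and hence $y\subseteq\pi$. Here I would show that the external line $\sigma_i\cap\cO_2$ already lies in $\pi$: the two $\cO_2$--points spanning it correspond to bi--lines through the external line $a$ of $C_i$, and $a$ is precisely the repeated line carried by $\pi$ (cf.\ Proposition \ref{conics1}). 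Since $\sigma_i\ne\pi$, the intersection $\sigma_i\cap\pi$ is a single line, so $y=\sigma_i\cap\pi=\sigma_i\cap\cO_2$; thus $\sigma_1$ and $\sigma_2$ share their external line.

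To close that subcase I would convert this common external line into a contradiction via Lemma \ref{bundle}. Under the inverse Veronese map the line $y\subseteq\cO_2$ is the pencil of bi--lines through a fixed line $a$ with carrier point $A$, so equality of the two external lines forces the same pair $(a,A)$ for $\sigma_1$ and $\sigma_2$. Writing $C_1,C_2$ for the center--loci conics of ${\cal B}$ attached to $\sigma_1,\sigma_2$, the point $A$ is the pole of $a$ with respect to $C_i$ when $q$ is odd and the nucleus of $C_i$ when $q$ is even. Were $C_1=C_2$, the bijection of Remark \ref{corr1} would force the coinciding lines $a$ to come from coinciding orbits $b'$, contrary to the distinct--orbit assumption; hence $C_1\ne C_2$. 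Thus $a$ is the polar line of $A$ with respect to two distinct conics of ${\cal B}$ (for $q$ odd), or $A$ is the common nucleus of two distinct conics of ${\cal B}$ (for $q$ even), contradicting Lemma \ref{bundle}. This would establish that $\sigma_1\cap\sigma_2$ is at most a point.
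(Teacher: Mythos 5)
Your proof is correct and follows essentially the same route as the paper's: the same case split according to whether $c_1$ and $c_2$ lie in the same $S$--orbit, with Proposition \ref{pp1} and Proposition \ref{mcse} disposing of the same--orbit case and Remark \ref{corr1} together with Lemma \ref{bundle} closing the distinct--orbit case. Your only deviation --- in the subcase $\pi_1=\pi_2$ you show directly that a putative line $y$ must coincide with the common external line $\sigma_j\cap\pi=\sigma_j\cap\cO_2$, instead of the paper's dichotomy on whether $y$ contains the point $Q_1\cap Q_2$ --- is a harmless, slightly more explicit rendering of the same step.
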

	\begin{proof}
Let $\sigma_1,\sigma_2$ be two planes of $\Pi_i$. From Section \ref{nets} there exist uniquely determined elliptic solids $\Sigma_1=(\pi_1,Q_1)$ and $\Sigma_2=(\pi_2,Q_2)$ of $\cal E$ containing $\sigma_1$ and $\sigma_2$, respectively.
Let $c_i=\sigma_i\cap Q_i$ be the conic in $\Sigma_i$, $i=1,2$.

Assume first that $c_1,c_2$ belong to the same $S$--orbit. Then, from Proposition \ref{pp1}, $c_1$ and $c_2$ share exactly one point.	
Since $S$ permutes the planes of $\cal T$ in a single orbit we have that $\pi_1\ne\pi_2$. Therefore, from Proposition \ref{mcse}, $Q_1\cap Q_2$ consists of $2$ points. Assume that $y=\sigma_1\cap\sigma_2$ is a line, then the conics $c_1$ and $c_2$ should share two points, a contradiction. 	

Assume that $c_1,c_2$ does not belong to the same $S$--orbit. Then $c_1$ and $c_2$ have no point in common. Assume that $y=\sigma_1\cap\sigma_2$ is a line. If $Q_1 \cap Q_2$ consists of $2$ points, then, since $y \subseteq \Sigma_1\cap\Sigma_2$, from Proposition \ref{mcse}, the conics $c_1$ and $c_2$ should share at least one point, a contradiction. If $\vert Q_1 \cap Q_2 \vert = 1$, since $y \subseteq \Sigma_1\cap\Sigma_2$, then the line $y$ either contains the point $Q_1 \cap Q_2$ and, again, the conics $c_1$ and $c_2$ should share one point, a contradiction, or the line $y\subset \cO_2$ is external to both $c_1$, $c_2$. If this last case occurs, then, under the inverse of the map {\em v}, the points of $y$ correspond to bi--lines having in common $q+1$ points of a line $z$ and a further point $Z \notin z$. In particular let $c_i'$ denote the conic of the circumscribed bundle $\cal B$ locus of centers of the imaginary bi--lines corresponding to points of $c_i$, $i=1,2$. It turns out that $c_1' \ne c_2'$ (see Remark \ref{corr1})  and $z$ is the polar line of the point $Z$ with respect to both $c_1'$ and $c_2'$, when $q$ is odd or $Z$ is the nucleus of both conics $c_1'$ and $c_2'$, when $q$ is even. But this, again, contradicts Lemma \ref{bundle}.
\end{proof}

	\begin{theorem}
The set ${\cal C}\cup\Pi_i\cup\Pi_e\cup{\cal N}$ consists of $q^3(q^2-1)(q-1)/3+(q^2+1)(q^2+q+1)$ planes mutually intersecting in at most one point. 	
	\end{theorem}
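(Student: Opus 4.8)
The plan is to establish that the four constituent families are pairwise far apart, since each family has already been shown internally good. First I would count: $|\mathcal{C}| = q^3(q^2-1)(q-1)/3$, while $|\Pi_e| = q(q+1)(q^2+q+1)/2$, $|\Pi_i| = q(q-1)(q^2+q+1)/2$, and $|\mathcal{N}| = q^2+q+1$, so that $|\Pi_e| + |\Pi_i| + |\mathcal{N}| = (q^2+1)(q^2+q+1)$ as required for the total. Intra-family intersections are already handled: $\mathcal{C}$ by the corollary following Lemma \ref{5arc}, $\Pi_e$ and $\Pi_i$ by the two lemmas immediately preceding this theorem, and $\mathcal{N}$ by the observation in Section 2 that two planes of $\mathcal{N}$ meet in a point of $\mathcal{O}_1$. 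Thus the real work is the six cross-family comparisons.

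The key organizing principle I would exploit is the stratification of the cubic hypersurface $\mathcal{S}$ by the $G$-orbits $\mathcal{O}_1,\mathcal{O}_2,\mathcal{O}_3$ together with the fact that every plane of $\mathcal{C}$ is disjoint from $\mathcal{S}$. This immediately disposes of the three comparisons involving $\mathcal{C}$: a plane $\sigma \in \mathcal{C}$ misses $\mathcal{S}$ entirely, whereas each plane of $\mathcal{N}$ lies in $\mathcal{S}$, each plane of $\Pi_e$ meets $\mathcal{S}$ in the line $R_1R_2 \subseteq \mathcal{O}_2$ plus further points, and each plane of $\Pi_i$ meets $\mathcal{S}$ in a line of $\mathcal{O}_2$ together with $q+1$ points of $\mathcal{O}_3$. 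Since any common point of two planes would lie in both, and a point of $\sigma$ cannot lie on $\mathcal{S}$, I would argue that $\sigma$ can share at most the points that a general plane shares with these $\mathcal{S}$-rich planes; more carefully, if $\sigma$ met one of these planes in a line $y$, then $y$ would meet $\mathcal{S}$ (every line of $\mathcal{N}$, $\Pi_e$, $\Pi_i$ lies wholly or substantially in $\mathcal{S}$, and $y$ lying in such a plane forces $y \cap \mathcal{S} \neq \emptyset$), contradicting $\sigma \cap \mathcal{S} = \emptyset$. The delicate point here is ensuring a putative common \emph{line} is forced to meet $\mathcal{S}$; I would phrase it as: $y \subseteq \sigma$ gives $y \cap \mathcal{S} = \emptyset$, but $y$ also lies in a plane whose intersection with $\mathcal{S}$ is a curve (conic or line) meeting every line of that plane, yielding the contradiction.

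For the three remaining comparisons among $\Pi_e$, $\Pi_i$, $\mathcal{N}$, all of which live inside $\mathcal{S}$, I would track which orbit the points of a hypothetical common line must occupy. A plane of $\mathcal{N}$ meets $\mathcal{O}_1$ in a conic (or line) and otherwise lies in $\mathcal{S}$; a plane of $\Pi_e$ meets $\mathcal{O}_1$ only at the point $R$ of its tangent plane $\pi \in \mathcal{T}$, and similarly for $\Pi_i$. The comparisons $\Pi_e$ versus $\Pi_i$ and each versus $\mathcal{N}$ I would resolve by noting that a common line $y$ would have to be simultaneously a line of $\mathcal{O}_2$ (forced by the $\Pi_e$/$\Pi_i$ structure, where the only line in the plane lying in $\mathcal{S}$ is $R_1R_2 \subseteq \mathcal{O}_2$) and a line of the $\mathcal{N}$-plane or the other $\Pi$-plane; tracing the corresponding degenerate quadrics and invoking the polar-line/nucleus argument of Lemma \ref{bundle} as in the two preceding lemmas gives the contradiction.

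The main obstacle I anticipate is the $\Pi_e$-versus-$\mathcal{N}$ and $\Pi_i$-versus-$\mathcal{N}$ cases, because a plane of $\mathcal{N}$ is defined by a \emph{pencil of concurrent lines} (a point $A$ of $\PG(2,q)$) rather than by a secant or external line of a conic, so the clean bijection with the circumscribed-bundle structure used in the last two lemmas is not directly available. I would handle these by translating back under $v^{-1}$: a common point of $\pi_A \in \mathcal{N}$ and a plane of $\Pi_e$ corresponds to a bi-line that is simultaneously centered at $A$ and is one of the bi-lines $t_{P_i}r$ or $r_1r_2$ attached to a conic $C$ of $\mathcal{B}$, and I would show that at most one such coincidence can occur on any line, so the intersection is a single point. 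The bookkeeping of exactly which degenerate quadrics are shared, and confirming a \emph{line} of coincidences is impossible in each subcase, is where the argument is most computational, and I would lean throughout on Lemma \ref{bundle} together with the pencil classification in \cite[Table 7.7]{JWPH1} to close each case.
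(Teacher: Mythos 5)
Your global architecture (pairwise cross-family comparisons, counts, and the disposal of the three cases involving $\mathcal{C}$ via disjointness from $\mathcal{S}$) matches the paper, and those $\mathcal{C}$-cases are handled correctly: a common line inside a plane of $\Pi_e\cup\Pi_i\cup\mathcal{N}$ must meet the line of $\mathcal{O}_2\cup\mathcal{O}_3$ contained in that plane, hence meet $\mathcal{S}$, contradicting $\sigma\cap\mathcal{S}=\emptyset$ for $\sigma\in\mathcal{C}$. The genuine gap is in the comparisons of $\Pi_e$ and $\Pi_i$ with $\mathcal{N}$, which you rightly flag as delicate but do not close, and your sketch would fail as written. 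Since $\pi_A\in\mathcal{N}$ lies entirely in $\mathcal{S}$, a putative common line with $\sigma\in\Pi_e$ must lie in $\sigma\cap\mathcal{S}$, which is a conic together with the line $R_1R_2$; so the only candidate for a common line is $R_1R_2$ itself. But the points of $R_1R_2$ are exactly the ones missing from your accounting: they correspond to the bi-lines $ru$ with $u$ running through the pencil at $U=t_{P_1}\cap t_{P_2}$, not to the bi-lines $t_{P_1}r$, $t_{P_2}r$, $r_1r_2$ of $b$ that you list. Counting coincidences ``centered at $A$'' among your listed bi-lines only bounds $\pi_A\cap c_1$, and a conic never contains a line anyway; it says nothing about $R_1R_2$. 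Moreover, Lemma \ref{bundle}, on which you propose to lean throughout, is inapplicable here: it compares two \emph{distinct} conics of a circumscribed bundle, and in the $\mathcal{N}$-comparisons only one bundle conic is in play. The paper closes this case by pencil types instead: by \cite[Table 7.7]{JWPH1}, $R_1R_2$ is the unique pencil whose members are all bi-lines (base a line $z$ plus a point $Z\notin z$, with centers varying along $z$), whereas every line of $\pi_A$ is a pencil of quadrics all singular at $A$, which always contains imaginary bi-lines or at most $q$ bi-lines. (Your center idea can be repaired to the same effect: the $q+1$ bi-lines on $R_1R_2$ have centers $r\cap u$ ranging bijectively over the points of $r$, since $U\notin r$, so at most one is centered at $A$ and $R_1R_2\not\subseteq\pi_A$; but this is precisely the step you needed to write down, and it concerns exactly the bi-lines you omitted. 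The same repair is needed for $\Pi_i$ versus $\mathcal{N}$, where the $\mathcal{O}_2$-line consists of the bi-lines $a\cdot DA$ with centers $D$ ranging over $a$.)

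A second, smaller gap: in the $\Pi_e$ versus $\Pi_i$ comparison you assert that a common line ``would have to be a line of $\mathcal{O}_2$'' because the only line of either plane inside $\mathcal{S}$ is its $\mathcal{O}_2$-line; but neither plane is contained in $\mathcal{S}$, so a common line is not a priori contained in $\mathcal{S}$ at all --- it is only forced to meet it. The paper supplies this step through the ambient solids: a common line lies in $\Sigma_1\cap\Sigma_2$, and since $Q_1\setminus\pi_1$ and $Q_2\setminus\pi_2$ lie in different orbits ($\mathcal{O}_2$ versus $\mathcal{O}_3$) and are therefore disjoint, it concludes that the line lies in $\pi_1\cap\pi_2$, and only then identifies the common pair $(z,Z)$ and invokes Lemma \ref{bundle} --- the one place where that lemma genuinely carries the theorem: $z$ is secant to $c_1'$ and external to $c_2'$, forcing $c_1'\neq c_2'$, while $z$ is the polar line of $Z$ with respect to both ($q$ odd), respectively $Z$ the common nucleus ($q$ even). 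So your outline for this pair is the paper's route, but the containment-in-$\mathcal{S}$ step must be argued via the solids, not assumed.
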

	\begin{proof}
	\begin{itemize}
\item[1.] Assume that $\sigma_1\in {\cal C}$ and $\sigma_2\in \Pi_i\cup\Pi_e\cup{\cal N}$.
In this case $\sigma_1\subset {\cal O}_4$ and $\sigma_2$ always contains a line in ${\cal O}_2\cup {\cal O}_3$ and hence if $\sigma_1\cap\sigma_2$ was a line then $\sigma_1$ should contain a point of ${\cal O}_2\cup{\cal O}_3$.

\item[2.] Assume that $\sigma_1\in \Pi_i\cup\Pi_e$ and $\sigma_2\in{\cal N}$.
In this case $\sigma_2\subset {\cal S}$ whereas $\sigma_1$ meets $\cal S$ in the union of a conic and a line $r$. Hence if $\sigma_1\cap\sigma_2$ was a line, such a line should be $r$. From \cite[Table 7.7 ]{JWPH1} the line $r$ corresponds to the unique pencil of quadrics containing only bi--lines. On the other hand, a line of $\sigma_2$ corresponds to a pencil of quadrics always containing imaginary bi--lines or at most $q$ bi--lines.

\item[3.] Assume that $\sigma_1 \in \Pi_e$ and $\sigma_2 \in {\Pi_i}$.

Let $\Sigma_1=(\pi_1,Q_1)$ the unique hyperbolic solid of $\cal H$ containing $\sigma_1$ and let $\Sigma_2=(\pi_2,Q_2)$ the unique elliptic solid of $\cal E$ containing $\sigma_2$. Notice that $Q_1\setminus \pi_1$ is always disjoint from $Q_2\setminus \pi_2$.
Let $c_i=\sigma_i\cap Q_i$ be the conic in $\Sigma_i$, $i=1,2$. Assume that $\sigma_1\cap\sigma_2$ is a line $r$ Since $r\subset\Sigma_1\cap\Sigma_2$ it follows that $r\subset\pi_1\cap\pi_2$. Under the inverse of the map {\em v}, the points of $r$ correspond to bi--lines having in common $q+1$ points of a line $z$ and a further point $Z \notin z$. In particular, let $c_i'$ denote the conic of the circumscribed bundle $\cal B$ locus of centers of the (imaginary) bi--lines corresponding to points of $c_1$ ($c_2$). It turns out that $z$ is secant to $c_1'$ and external to $c_2'$. In particular, $z$ is the polar line of the point $Z$ with respect to both $c_1'$ and $c_2'$, when $q$ is odd or $Z$ is the nucleus of both conics $c_1'$ and $c_2'$, when $q$ is even. But this, again, contradicts Lemma \ref{bundle}.	
	\end{itemize}	
	\end{proof}
	
	\begin{cor}
There exists a constant dimension subspace code $\cal K$ with parameters $(6,q^3(q^2-1)(q-1)/3+(q^2+1)(q^2+q+1),4;3)_q$.	
	\end{cor}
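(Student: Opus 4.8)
The plan is to establish the cardinality first and then the minimum--distance property. The count is immediate: the four families are pairwise disjoint (they occupy different positions relative to the cubic hypersurface ${\cal S}$), with $\vert{\cal C}\vert=q^3(q^2-1)(q-1)/3$, $\vert\Pi_e\vert=q(q+1)(q^2+q+1)/2$, $\vert\Pi_i\vert=q(q-1)(q^2+q+1)/2$ and $\vert{\cal N}\vert=q^2+q+1$; since $\vert\Pi_e\vert+\vert\Pi_i\vert+\vert{\cal N}\vert=(q^2+q+1)\bigl(q(q+1)/2+q(q-1)/2+1\bigr)=(q^2+1)(q^2+q+1)$, the announced total follows. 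For the property that two distinct planes meet in at most one point I would split into within--family and cross--family cases. The within--family cases are already settled: two planes of ${\cal C}$ meet in at most one point (Section 3), the two preceding Lemmas dispose of $\Pi_e$ and of $\Pi_i$, and two planes of ${\cal N}$ meet in a single point of ${\cal O}_1$. Hence only the six cross--family pairs remain, and the recurring device is to assume the intersection is a line and exploit each plane's position with respect to ${\cal S}$.

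The cases involving ${\cal C}$ are the easiest. A plane $\sigma_1\in{\cal C}$ lies in ${\cal O}_4$ and is therefore disjoint from ${\cal S}$, whereas any $\sigma_2\in\Pi_e\cup\Pi_i\cup{\cal N}$ carries an entire line $r$ inside ${\cal O}_2\cup{\cal O}_3\subset{\cal S}$. If $\sigma_1\cap\sigma_2$ were a line $y$, then $y$ and $r$, being two lines of the projective plane $\sigma_2$, would necessarily meet, producing a point of ${\cal S}$ on $\sigma_1$, a contradiction. The pairs $\Pi_e,\Pi_i$ against ${\cal N}$ are handled dually: here $\sigma_2\in{\cal N}$ lies inside ${\cal S}$ while $\sigma_1\in\Pi_e\cup\Pi_i$ meets ${\cal S}$ in a conic together with a single line $r$, so a common line would have to coincide with $r$; but by \cite[Table 7.7]{JWPH1} the line $r$ represents the pencil all of whose members are bi--lines, whereas every line of an ${\cal N}$--plane represents a pencil containing imaginary bi--lines or at most $q$ bi--lines, again a contradiction.

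The genuinely delicate case, which I expect to be the \emph{main obstacle}, is a plane $\sigma_1\in\Pi_e$ against a plane $\sigma_2\in\Pi_i$, because now neither plane is disjoint from ${\cal S}$ nor contained in it. I would embed $\sigma_1$ in its hyperbolic solid $\Sigma_1=(\pi_1,Q_1)\in{\cal H}$ and $\sigma_2$ in its elliptic solid $\Sigma_2=(\pi_2,Q_2)\in{\cal E}$, and record the key observation that $Q_1\setminus\pi_1\subset{\cal O}_2$ is disjoint from $Q_2\setminus\pi_2\subset{\cal O}_3$. If $\sigma_1\cap\sigma_2$ is a line $r$, then $r\subseteq\Sigma_1\cap\Sigma_2$ and, via this disjointness, $r$ is forced to lie in $\pi_1\cap\pi_2$. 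Applying the inverse of the map {\em v} to the points of $r$ yields a one--parameter family of bi--lines sharing a common line $z$ and one further point $Z\notin z$; letting $c_1',c_2'$ be the conics of the circumscribed bundle ${\cal B}$ which are the loci of the centres of the (imaginary) bi--lines attached to $\sigma_1$ and $\sigma_2$ (Remarks \ref{corr} and \ref{corr1}), one sees that $z$ is the polar line of $Z$ with respect to both $c_1'$ and $c_2'$ when $q$ is odd, or that $Z$ is the common nucleus of $c_1'$ and $c_2'$ when $q$ is even. Either conclusion contradicts Lemma \ref{bundle}, which closes the argument.

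The hard part will be exactly this last reduction: showing that a line intersection is confined to $\pi_1\cap\pi_2$, and then correctly translating the family of degenerate conics cut out on $r$ back into the bundle ${\cal B}$ so that the polar/nucleus coincidence forbidden by Lemma \ref{bundle} actually arises. All the remaining cross--family comparisons reduce to the two elementary "a point of ${\cal S}$ on a plane off ${\cal S}$" and "a forced special pencil line $r$" arguments, so once the $\Pi_e$ versus $\Pi_i$ case is secured the theorem is complete, and the stated corollary giving a $(6,q^3(q^2-1)(q-1)/3+(q^2+1)(q^2+q+1),4;3)_q$ code is immediate.
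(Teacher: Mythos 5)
Your proposal is correct and takes essentially the same route as the paper: the corollary follows at once from the Theorem, whose proof you have reconstructed with the identical case decomposition (within--family intersections via the two preceding Lemmas, Section 3 for $\cal C$ and Section 2 for $\cal N$; cross--family pairs handled by each plane's position relative to ${\cal S}$) and the same final step, reducing the $\Pi_e$ versus $\Pi_i$ case to $r\subset\pi_1\cap\pi_2$ and a polar/nucleus coincidence contradicting Lemma \ref{bundle}. Your only addition is making the cardinality computation $\vert\Pi_e\vert+\vert\Pi_i\vert+\vert{\cal N}\vert=(q^2+1)(q^2+q+1)$ explicit, which the paper leaves implicit.
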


	\begin{cor}
The code $\cal K$ admits a group of order $3(q^2+q+1)$ as an automorphism group. It is the normalizer of a Singer cyclic group
of $\PGL(3,q)$.
	\end{cor}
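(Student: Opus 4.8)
The plan is to exhibit $N$, the normalizer in $G=\PGL(3,q)$ of the Singer cyclic group $S$, as a group of collineations of $\PG(5,q)$ preserving the whole code ${\cal K}={\cal C}\cup\Pi_i\cup\Pi_e\cup{\cal N}$. Recall from the remarks of Section~3 that $N$ is metacyclic of order $3(q^2+q+1)$. Since the action of $G$ on $\PG(2,q)$ is carried, via the Veronese map {\em v}, to an action on $\PG(5,q)$ (the correspondence {\em v} being $G$--equivariant by construction), $N$ acts as a group of collineations of $\PG(5,q)$. First I would record that this action is faithful: an element fixing every quadric fixes in particular every repeated line, hence every line of $\PG(2,q)$, hence is trivial in $G$; therefore the image of $N$ in the collineation group of $\PG(5,q)$ again has order $3(q^2+q+1)$. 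It then suffices to prove that $N$ leaves invariant each of the four constituents of ${\cal K}$.

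Two of these constituents are stabilized by all of $G$, and \emph{a fortiori} by $N$. The set ${\cal C}$ is the {\em v}--image of the full $G$--orbit ${\cal B}^G$ of circumscribed bundles, and is therefore $G$--invariant. The family ${\cal N}$ consists of the $q^2+q+1$ planes $\pi_A$, one for each point $A$ of $\PG(2,q)$; since $g\in G$ sends the net of quadrics centred at $A$ to the net centred at $A^g$, it permutes the planes $\pi_A$, so ${\cal N}$ is $G$--invariant as well.

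The heart of the matter is the invariance of $\Pi_e$ and $\Pi_i$, the two constituents genuinely tied to the choice of $S$. I would use the manifestly equivariant description of their members: each plane of $\Pi_e$ is $\pi_e(b,C)=\langle\,{\rm \emph{v}}(\beta):\beta\ \text{a bi--line of }b\ \text{centred at a point of }C\,\rangle$, determined by an $S$--orbit $b$ of bi--lines of $\PG(2,q)$ and a conic $C$ of the circumscribed bundle ${\cal B}$ fixed by $S$ (this is the plane whose intersection with ${\cal O}_2$ consists of the {\em v}--images of those bi--lines, by Proposition~\ref{conics}). Now take $g\in N$. Since $N$ normalizes $S$, it permutes the $S$--orbits of bi--lines, so $b^g$ is again such an orbit; since $N$ stabilizes ${\cal B}$ (as noted in Section~3), $C^g$ is again a conic of ${\cal B}$; and since $g$ is a collineation, it carries the bi--lines of $b$ centred at points of $C$ exactly to the bi--lines of $b^g$ centred at points of $C^g$. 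Using that {\em v} commutes with $g$ and with taking spans, one obtains $g\bigl(\pi_e(b,C)\bigr)=\pi_e(b^g,C^g)\in\Pi_e$, whence $\Pi_e^{\,g}=\Pi_e$. The identical argument, with bi--lines replaced by imaginary bi--lines and ${\cal O}_2$ by ${\cal O}_3$ (using Proposition~\ref{conics1} and Remark~\ref{corr1}), gives $\Pi_i^{\,g}=\Pi_i$.

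I expect the $N$--invariance of $\Pi_e$ and $\Pi_i$ to be the only delicate point, precisely because these two families are not $G$--invariant: a generic $g\in G$ sends an $S$--orbit of (imaginary) bi--lines to an orbit of the conjugate group $S^g$, which need not be an $S$--orbit, so such a $g$ would carry the construction outside $\Pi_e$ and $\Pi_i$. What rescues the argument is exactly the two structural facts that $N$ normalizes $S$ and fixes ${\cal B}$, and these are what force the automorphism group to shrink from $G$ down to $N$. Combining the four invariances, $N$ stabilizes ${\cal K}$, so $N$ embeds into $\mathrm{Aut}({\cal K})$ as a group of order $3(q^2+q+1)$, namely the normalizer of a Singer cyclic group of $\PGL(3,q)$, as claimed.
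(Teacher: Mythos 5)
Your proposal is correct and follows exactly the route the paper intends: the paper states this corollary without an explicit proof, since it is immediate from the construction, namely that $\cal C$ and $\cal N$ are $G$--invariant (being built from the $G$--orbit ${\cal B}^G$ and from the points of $\PG(2,q)$, respectively), while $\Pi_e$ and $\Pi_i$ are parametrized by pairs (an $S$--orbit of (imaginary) bi--lines, a conic of the $S$--invariant bundle $\cal B$) and are therefore preserved by the normalizer $N$ of $S$. Your write-up supplies precisely these details (plus the faithfulness of the lifted action, which guarantees the order $3(q^2+q+1)$), so it matches the paper's implicit argument.
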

	
	\begin{remark}
{\rm We say that a constant dimension subspace code is {\em complete} if it is maximal with respect to set--theoretic inclusion.
Some computer tests performed with MAGMA \cite{magma} yield that our code is not complete when $q=3$. Indeed there exist
other $39$ planes that can be added to our set. However, when $q=4,5$ our code is complete. We conjecture that our code is complete whenever $q \ge 4$.}	
	\end{remark}


\begin{thebibliography}{10}

\bibitem{BBEF} R.D. Baker, J.M.N. Brown, G.L. Ebert, J.C. Fisher, Projective bundles, {\em Bull. Belg. Math. Soc. Simon Stevin} 1 (1994), no. 3, 329-336.

\bibitem{BBCE} R.D. Baker, A. Bonisoli, A. Cossidente, G.L. Ebert, Mixed partitions of $\PG(5,q)$, {\em Discrete Math.} 208/209 (1999), 23-29.

\bibitem{BCS} E. Ballico, A. Cossidente, A. Siciliano, External flats to varieties in symmetric product spaces over finite fields, {\em Finite Fields Appl.} 9 (2003), no. 3, 300-309.

\bibitem{BHR}  J.N. Bray, D.F.  Holt, Derek, C.M. Roney--Dougal, {\em The maximal subgroups of the low-dimensional finite classical groups}, London Mathematical Society Lecture Note Series, 407,Cambridge University Press, Cambridge, 2013.

\bibitem{magma} J. Cannon, C. Playoust, {\em An introduction to MAGMA}, University of Sydney,
Sydney, Australia, 1993.

\bibitem{CP1} A. Cossidente, F. Pavese, On subspace codes, {\em Designs Codes Cryptogr.} (to appear) DOI 10.1007/s10623-014-0018-6.


\bibitem{RF} R. Figueroa, A family of not $(V,l)$--transitive projective planes of order $q^3$, $q\not\equiv 1\mod 3$ and $q>2$, {\em Math. Z.} 181 (1982), no. 4, 471-479.

\bibitem{DGG} D.G. Glynn, {\em Finite projective planes and related combinatorial systems}, Ph.D. thesis, Adelaide Univ., 1978.

\bibitem{DGG1} D.G. Glynn, On Finite Division Algebras, {\em J. Combin. Theory Ser. A} 44 (1987), no. 2, 253-266.

\bibitem{HKK} T. Honold, M. Kiermaier, S. Kurz, Optimal binary subspace codes of length $6$, constant dimension $3$ and minimum distance $4$, {\em Contemp. Math.-Am. Math. Soc.} 632 (2015), 157-176.

\bibitem{JWPH1}  J.W.P. Hirschfeld, {\em Projective Geometries over Finite Fields},  Oxford Mathematical Monographs, Oxford Science Publications,The Clarendon Press, Oxford University Press, New York, 1998.

\bibitem{JWPH}  J.W.P. Hirschfeld, {\em Finite projective spaces of three dimensions},  Oxford Mathematical Monographs, Oxford Science Publications,The Clarendon Press, Oxford University Press, New York, 1985.

\bibitem{HT} J.W.P. Hirschfeld, J.A. Thas, {\em General Galois Geometries},  Oxford Mathematical Monographs, Oxford Science Publications,The Clarendon Press, Oxford University Press, New York, 1991.

\bibitem{huppert} B. Huppert, {\em Endliche Gruppen, I}, Die Grundlehren der Mathematischen Wissenschaften, Band 134 Springer-Verlag, Berlin-New York (1967).

\bibitem{steiner} J. Steiner, Systematische Entwicklung der Abh\"{a}ngigkeit geometrischer Gestalten von einander, Reimer,
Berlin (1832).



    \end{thebibliography}
    \end{document}